\numberwithin{equation}{section}
\newtheorem{theorem}{Theorem}[section]
\newtheorem{defn}[theorem]{Definition}
\newtheorem{lemma}[theorem]{Lemma}
\newtheorem{remark}[theorem]{Remark}
\newtheorem{prop}[theorem]{Proposition}
\begin{document}

\title[Cucker-Smale-Navier-Stokes system]{Global weak solutions to the compressible Cucker-Smale-Navier-Stokes system in a bounded domain}

\author{Li Chen}
\address{Lehrstuhl f\"{u}r Mathematik IV, Universit\"{a}t Mannheim,
Mannheim 68131, Germany}
\email{chen@math.uni-mannheim.de}

\author{Yue Li}
\address{Department of Mathematics, Nanjing University,
 Nanjing 210093, P.R. China}
\email{liyue2011008@163.com}

\author{Nicola Zamponi}
\address{Institute for Analysis and Scientific Computing, Vienna University of Technology, Wiedner Hauptstra\ss e 8-10, 1040 Vienna, Austria}
\email{nicola.zamponi@asc.tuwien.ac.at}

\begin{abstract}
A coupled kinetic-fluid model {is investigated, which describes} the dynamic behavior of an ensemble of Cucker-Smale flocking particles interacting with {a} viscous fluid in a three-dimensional bounded domain.
This system consists of {a} kinetic Cucker-Smale equation and {a} compressible Navier-Stokes system with nonhomogeneous boundary conditions. The global existence of weak solutions to this system with adiabatic coefficient $\gamma> {3}/{2}$ is established.
\end{abstract}

\keywords{Cucker-Smale equation; compressible Navier-Stokes equations; weak solutions; existence.}
\subjclass[2010]{35Q35, 82C22, 35D30. }
\maketitle


\section{ Introduction}

\subsection{The model}
Kinetic-fluid models, which describe the evolution of dispersed particles in a fluid, arise {in} many applications in industry {(}for instance, wastewater treatment \cite{BBE}, sedimentation phenomenon \cite{BBKT,S,SG}, sprays \cite{BBPS}, rainfall formation \cite{FFS} and sedimentation-consolidation processes \cite{BWC}{)}.  The Cucker-Smale kinetic model has been recently introduced in \cite{CS1,CS2,HL} to describe the flocking and swarming phenomenon of small agents. Further Cucker-Smale-fluid models are applied to investigate the same phenomenon in {fluids}, \cite{BCHK1,BCHK2,BCHK3,CHJK2,CHJK1,CL}.

Let $f(t,x,v)\geq 0$ be one-particle distribution function for flocking particles with velocity $v\in \mathbb{R}^3$
at position $x\in \Omega\subset\mathbb{R}^3$ and time $t>0$, $\rho(t,x)\geq 0$ be the density of the fluid,
and $u(t,x)\in\mathbb{R}^3$ be the velocity of the fluid.
Then, the compressible Cucker-Smale-Navier-Stokes equations read as
\begin{equation}\label{CSNS}
\left\{
\begin{aligned}
  &\partial_t f+v\cdot\nabla_x f+{\rm{div}}_v\big((u-v)f\big)-\Delta_vf+{\rm{div}}_v(fL[f])=0,\\
  &\partial_t\rho+{\rm{div}}_x(\rho u)=0,\\
  &\partial_t(\rho u)+{\rm{div}}_x(\rho u\otimes u)+\nabla_x\rho^\gamma-{\rm{div}}_x\mathbb{S}(\nabla u)=-\int_{\mathbb{R}^3}(u-v)f\,dv,
\end{aligned}
\right.
\end{equation}
where the first two terms in $\eqref{CSNS}_1$ represent free transport, and the third term in $\eqref{CSNS}_1$ describes the friction force
exerted by the fluid. Correspondingly, the fluid is influenced by the flocking particles through the force appeared on the right-hand side of
$\eqref{CSNS}_3$. The last term in $\eqref{CSNS}_1$ describes the interaction between particles who try to align with their neighbors.
The alignment operator $L$ is given by
\begin{align*}
L[f]=\int_{\Omega\times\mathbb{R}^3}K(x,y)f(t,y,w)(w-v)\,dwdy,
\end{align*}
where the kernel function $K:\Omega\times\Omega\rightarrow \mathbb{R}_+$ is symmetric.
$\rho^\gamma$ is the adiabatic pressure of the fluid, and $\mathbb{S}$ satisfies
\begin{align*}
\mathbb{S}(\nabla u)=\mu_1(\nabla_x u+\nabla_x^\top u)+\mu_2{\rm{div}}u\mathbb{I},
\end{align*}
where $\mu_1$ and $\mu_2$ are coefficients of viscosity verifying $\mu_1>0$ and $2\mu_1+3\mu_2\geq 0$, and $\mathbb{I}$ is the $3\times 3$ identity matrix.
We consider the system \eqref{CSNS} equipped with the initial data:
\begin{align}
& f(0,x,v)=f_0(x,v),\quad (x,v)\in \Omega\times\mathbb{R}^3,\label{fi}\\
& \rho(0,x)=\rho_0(x),\quad u(0,x)=u_0(x),\quad x\in \Omega,\label{ui}
\end{align}
together with the following nonhomogeneous boundary conditions:
\begin{align}
& \gamma^-f(t,x,v)=g(t,x,v),\quad (t,x,v)\in(0,T)\times\Sigma^-,\label{fb}\\
& \rho(t,x)=\rho_B(x),\quad (t,x)\in (0,T)\times\Gamma_{\rm{in}},\label{rhob}\\
& u(t,x)=u_B(x),\quad (t,x)\in (0,T)\times\partial\Omega,\label{ub}
\end{align}
where $\Omega\subset\mathbb{R}^3$ is a bounded domain with smooth boundary $\partial \Omega$,
 $\Sigma^{-}:=\{(x,v)\in\partial\Omega\times\mathbb{R}^3| v\cdot \nu(x)<0\}$,
 $\gamma ^{-}f(t,x,v)$ is the trace of $f$ on $(0,T)\times\Sigma^-$,
 and $\Gamma_{\rm{in}}:=\{x\in\partial\Omega|x\cdot\nu(x)<0\}$.
 Here $\nu(x)$ denotes the outward unit normal vector to $x\in\partial\Omega$.

\subsection{Previous results}
The mathematical analysis of the coupling of flocking kinetic equation and fluid system has received considerable {attention} in the last few years.
Below we give a short review of 
{the state of the art concerning the topic.}

We first recall some results on incompressible Cucker-Smale-Navier-Stokes systems.
Bae et al.  \cite{BCHK1} proved global existence of weak solutions and the a priori time-asymptotic
exponential flocking estimates for any smooth flow to an incompressible Cucker-Smale-Navier-Stokes system, when the viscosity is large enough.
Furthermore, they derived the well-posedness of global strong solutions to it in \cite{BCHK2}.
Choi and Lee \cite{CL} established the global existence of weak and strong
solutions to it in $\mathbb{R}^2$.
The well-posedness of weak solutions to an incompressible Cucker-Smale-Navier-Stokes system with shear thickening was given in \cite{HKKP} and \cite{MPP} by means of different methods.
The global existence of weak solutions to an Cucker-Smale-Fokker-Planck-Navier-Stokes equations in a non-perturbative
setting in $\mathbb{R}^d$ $(d=2{\;\;\rm{or}}\;3)$ and global existence of strong solutions in $\mathbb{R}^2$ were obtained in \cite{HXZ1}.
In \cite{CHJK2}, Choi et al. proved that the coupled kinetic-fluid system consisting of kinetic thermomechanical Cucker-Smale equation and Navier-Stokes system
possesses global weak and strong solutions.
In addition, they obtained a priori estimates of large-time behavior of strong solutions which exhibits an exponential alignment between TCS
particles and fluid asymptotically.

Now we focus on   compressible kinetic-fluid systems case.
In \cite{BCHK3}, Bae et al. showed the global existence of strong solutions and time-asymptotic behavior for a compressible Cucker-Smale-Navier-Stokes system.
Ha et al. established the well-posedness of classical solutions to a Cucker-Smale-Fokker-Planck-Navier-Stokes system with arbitrarily
large initial data which may contain vacuum for $1D$ and $2D$ case in \cite{HHXZ1} and \cite{HHXZ2}, respectively.
The global well-posedness of strong solutions and their emergent behavior for a  coupled system  of thermomechanical
Cucker-Smale equation and Navier-Stokes system were obtained in \cite{CHJK1}.

In case $f\equiv 0$, the system \eqref{CSNS} is reduced to the classic compressible isentropic Navier-Stokes equations. The weak solution theory of this system has been intensively developed in the literature. Lions \cite{L} obained the existence of global weak solutions to the three dimensional Navier-Stokes equations for general initial data under the restriction $\gamma\geq {9}/{5}$. Later, Feireisl, Novotn\'{y} and Petzeltov\'{a} \cite{FNP} improved the index to $\gamma>{3}/{2}$. In \cite{JZ}, Jiang and Zhang showed that the Navier-Stokes equations with axisymmetric initial data possess global axisymmetric weak solutions for $\gamma>1$. Bresch and Jabin \cite{BJ} proved the global existence of appropriate weak solutions to compressible Navier-Stokes equations with general viscous stress tensor.
In the case of nonhomogeneous Dirichlet boundary condition, the global existence of weak solutions was given by Plotnikov and Sokolowski \cite{PS} by means of Young measure and other tools. They first showed the Navier-Stokes equations possess weak solution when $\gamma$ is big enough, then this result was extended to the case of $\gamma>3/2$ with the help of kinetic theory. Lather, Chang, Jin and Novotn\'{y} \cite{CJN} gave an other proof of this result thanks to the effective viscous flux identity, oscillations defect measure and renormalization techniques for the continuity equation in the {spirit} of \cite{FNP,L}.

\subsection{Main result of this paper}

The goal of this paper is to prove global existence of weak solutions to problem \eqref{CSNS}-\eqref{ub}.
We first give the definition of weak solutions.

\begin{defn}\label{defn}
Let $T>0$ be arbitrary but fixed.
We say that a triplet $(f, \rho, u)$ is a bounded energy weak solution to problem \eqref{CSNS}-\eqref{ub} on {\rm{[}0,T\rm{]}} if it possesses the regularity
\begin{gather*}
0\leq f\in L^{\infty}(0,T;L^1\cap L^{\infty}(\Omega\times\mathbb{R}^3)),\quad|v|^2f\in L^{\infty}(0,T;L^1(\Omega\times\mathbb{R}^3)),\\
0\leq \rho\in L^\infty(0,T;L^{\gamma}(\Omega)),\quad u\in L^2(0,T;H^1(\Omega)),\quad
\rho u\in L^\infty\big(0,T;L^{\frac{2\gamma}{\gamma+1}}_w(\Omega)\big),
\end{gather*}
and fulfills the following relations:\\
1. Weak formulation of the Cucker-Smale equation: for any $\varphi\in C^2_c([0,T)\times\bar{\Omega}\times\mathbb{R}^3)$ with $\varphi=0$ on $(0,T)\times\Sigma^+$,
it holds
\begin{align*}
&\int^{T}_0\int_{\Omega\times\mathbb{R}^3}f\big(\partial_t\varphi+v\cdot\nabla_x\varphi+(u-v)\cdot\nabla_v\varphi
+L[f]\cdot\nabla_v\varphi
+\Delta_v\varphi\big)\,dxdvdt\nonumber\\
&\qquad +\int_{\Omega\times\mathbb{R}^3}f_0\varphi(0,x,v)\;dxdv
=\int^{T}_{0}\int_{\Sigma^-}\big(v\cdot \nu(x)\big)g\varphi\,d\sigma(x)\,dvdt,
\end{align*}
where $\Sigma^{+}:=\{(x,v)\in\partial\Omega\times\mathbb{R}^3|  v\cdot \nu(x)>0\}$ \emph{(}Recall  $\Sigma^{-}=\{(x,v)\in\partial\Omega\times\mathbb{R}^3|   v\cdot \nu(x)<0\}\emph{)}$;\\
2. Weak formulation of the continuity equation: for any $\psi\in C^1_c([0,T)\times\bar \Omega)$, it holds
\begin{align*}
\int^{T}_0\int_{\Omega}(\rho\partial_t\psi+\rho u\cdot\nabla_x\psi)\,dxdt
+\int_{\Omega}\rho_0\psi(0,x)\,dx=0;
\end{align*}
{3}. Weak formulation of the momentum balance equation: for any $\phi\in C^1_c([0,T)\times\bar\Omega;\mathbb{R}^3)$, it holds
\begin{align*}
\int^{T}_{0}\int_{\Omega}&\big(\rho u\cdot \partial_t\phi+\rho u\otimes u:\nabla_x\phi
+\rho^{\gamma}{\rm{div}}_x\phi-\mathbb{S}(\nabla_x u):\nabla_x\phi\\
&+(j-nu)\cdot\phi\big)\,dxdt
+\int_{\Omega}\rho_0u_0\cdot\phi(0,x)\,dx=0,
\end{align*}
where $n:=\int_{\mathbb{R}^3}f\,dv$ and $j:=\int_{\mathbb{R}^3}vf\,dv$;\\
{4}. Energy inequality: for any $\tau\in (0,T)$,
\begin{align}\label{energy main}
&\int_{\Omega}\Big(\frac{1}{2}\rho|u-u_{\infty}|^2+\frac{1}{\gamma-1}\rho^{\gamma}
+\int_{\mathbb{R}^3}\frac{|v|^2}{2}f\,dv\Big)(\tau)\,dx\nonumber\\
&+\int^{\tau}_0\int_{\Omega}\mathbb{S}(\nabla(u-u_\infty)):\nabla(u-u_\infty)\,dxdt
+\int^{\tau}_0\int_{\Sigma^-}(v\cdot \nu(x))\frac{|v|^2}{2}g\,d\sigma(x)dvdt\nonumber\\
&+\frac{1}{2}\int_0^\tau\int_{\Omega\times\mathbb{R}^3}\int_{\Omega\times\mathbb{R}^3}K(x,y)f(t,x,v)f(t,y,w)|w-v|^2\,dydwdxdvdt\nonumber\\
&+\int_0^\tau\int_{\Gamma_{\rm{in}}}\rho^\gamma|u_B\cdot \nu(x)|\,d\sigma(x)dt
+\int_0^\tau\int_{\Gamma_{\rm{out}}}\frac{1}{\gamma-1}\rho^\gamma|u_B\cdot\nu(x)|\,d\sigma(x)dt\nonumber\\
\leq&\int_{\Omega}\Big(\frac{1}{2}\rho_0|u_0-u_{\infty}|^2+\frac{1}{\gamma-1}\rho_0^{\gamma}
+\int_{\mathbb{R}^3}\frac{|v|^2}{2}f_0\,dv\Big)\,dx\nonumber\\
&+\int^{\tau}_0\int_{\Omega}\big(-\rho^{\gamma}{\rm{div}}u_{\infty}-\mathbb{S}(\nabla u_{\infty}):\nabla(u-u_\infty)
-\rho u\cdot\nabla u_\infty\cdot(u-u_\infty)\big)\,dxdt\nonumber\\
&+\int^{\tau}_0\int_{\Gamma_{\rm{in}}}\frac{\gamma}{\gamma-1}\rho^{\gamma-1}\rho_{B}|u_B\cdot \nu(x)|\,d\sigma(x)dt
+3\int_0^\tau\int_{\Omega\times\mathbb{R}^3}f\,dxdvdt
-\int^{\tau}_0\int_{\Omega}(j-nu)\cdot u_\infty\,dxdt,
\end{align}
where $u_\infty(x)\in W^{1,\infty}(\Omega;\mathbb{R}^3)$ is an extension of $u_B(x)$, and satisfies
\begin{gather}
{\rm{div}}\,u_\infty\geq 0\;\;\textrm{a.e.}\;\;{\rm{in}}\;\;U^-_h\equiv\{x\in\Omega\big|{\rm{dist}}(x,\partial\Omega)<h\}\;\; {\rm{and}}\;\;h>0 \;\;{\rm{is\,\,small}}.\label{extension2}
\end{gather}
The existence of the extension $u_\infty$ of $u_B$ can be found in \cite{Gir}.
\end{defn}

We are now in a position to state our main result.
\begin{theorem}\label{main}
Let $\gamma>3/2$, $\Omega\subset\mathbb{R}^3$ be a three dimensional bounded domain with smooth boundary,
and $K:\Omega\times\Omega\rightarrow \mathbb{R}_+$ be symmetric, bounded,
$x\mapsto K(x,y)$ is Lipschitz continuous uniformly in $y$ and $K(x,y)\big|_{y\in \Sigma^+}=0$.
Suppose that the initial and boundary data satisfy
\begin{align}
&0\leq f_0\in L^1\cap L^\infty(\Omega\times\mathbb{R}^3),\;\;0\leq g\in L^1\cap L^\infty((0,T)\times\Sigma^-), \label{81}\\
&0<\int_{\Omega}\rho_0\,dx<\infty,\quad\int^T_0\int_{\Sigma^-}|v|^2g(t,x,v)|v\cdot \nu(x)|\,d\sigma(x)dvdt<\infty, \label{82}\\
&\int_{\Omega}\Big(\rho_0\frac{|u_0-u_\infty|^2}{2}+\frac{1}{\gamma-1}\rho_0^\gamma\Big)\,dx
+\int_{\Omega\times\mathbb{R}^3}\frac{|v|^2}{2}f_0\,dxdv<\infty,\label{83}
\end{align}
then for any $T>0$, the problem \eqref{CSNS}-\eqref{ub} admits at least one bounded energy weak solution $(f,\rho,u)$ on $[0,T]$.
\end{theorem}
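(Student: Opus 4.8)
The plan is to construct the solution by a multi-layer approximation scheme combining the standard Feireisl–Novotný–Petzeltová machinery for the compressible Navier–Stokes equations (adapted to nonhomogeneous Dirichlet data as in \cite{PS,CJN}) with a regularized treatment of the kinetic Cucker–Smale equation. First I would introduce the approximate problem: for parameters $\varepsilon,\delta>0$ and a Galerkin dimension $N$, replace the continuity equation by the artificial-viscosity equation $\partial_t\rho+\mathrm{div}_x(\rho u)=\varepsilon\Delta_x\rho$ with the nonhomogeneous boundary condition on $\Gamma_{\mathrm{in}}$ handled via a suitable lift, add the artificial pressure $\delta\rho^\beta$ (with $\beta$ large) to $\rho^\gamma$ in the momentum equation, and project the momentum equation onto a finite-dimensional space $X_N$ spanned by smooth functions vanishing on $\partial\Omega$ (so that $u=u_B$ is imposed through the fixed lift $u_\infty$, writing $u=u_\infty+w$ with $w\in X_N$). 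For the kinetic equation I would keep the full structure but first truncate the velocity variable (or mollify $L[f]$ and the drag term $u$) so that for a given $u\in L^2(0,T;H^1)$ the equation $\partial_t f+v\cdot\nabla_x f+\mathrm{div}_v((u-v)f)-\Delta_v f+\mathrm{div}_v(fL[f])=0$ with inflow data $g$ has a unique nonnegative solution by a fixed-point/semigroup argument; the velocity-Laplacian $-\Delta_v f$ provides the parabolic smoothing that makes the traces $\gamma^\pm f$ well defined and yields $L^\infty$ bounds on $f$ by maximum principle together with $L^1$ and kinetic-energy bounds from testing against $1$ and $|v|^2/2$.

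Next I would set up the fixed point at the approximate level: given $u$, solve the kinetic equation for $f$, hence for $n=\int f\,dv$ and $j=\int vf\,dv$; feed the drag force $j-nu=\int(v-u)f\,dv$ into the momentum equation; solve the coupled $(\rho,u)$-system via the Feireisl scheme (first solve the parabolic continuity equation for $\rho$ given $u$, substitute, and apply Schauder to the Galerkin ODE system). The key a priori estimate, uniform in all parameters, is the approximate energy inequality obtained by testing the momentum equation with $u-u_\infty$, the continuity equation with appropriate powers of $\rho$, and the kinetic equation with $|v|^2/2$; the drag terms cancel up to the good dissipation term $\int\int\int K f f |w-v|^2$ and the controllable remainders $3\int f$ and $-\int(j-nu)\cdot u_\infty$ already displayed in \eqref{energy main}. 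This gives the uniform bounds $\rho\in L^\infty(0,T;L^\gamma)$, $\sqrt\delta\,\rho\in L^\infty(0,T;L^\beta)$, $u-u_\infty\in L^2(0,T;H^1_0)$, $f\in L^\infty(L^1\cap L^\infty)$, $|v|^2 f\in L^\infty(L^1)$, plus $\sqrt\varepsilon\,\nabla_x\rho\in L^2$.

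Then comes the passage to the limit, carried out in the usual order $N\to\infty$, then $\varepsilon\to 0$, then $\delta\to 0$. The kinetic part passes relatively smoothly: the velocity-regularization compactness, together with averaging-type arguments or the $\Delta_v$-smoothing, gives strong convergence of $n$ and $j$ and of the moments needed to identify $j-nu$ in the limit (using strong $L^2$ convergence of $u$ from the compressible theory); one must check that the truncations can be removed using the kinetic-energy bound so that the full operator $L[f]$ and the drag $\int(v-u)f\,dv$ are recovered, and that $\gamma^- f=g$ survives in the trace sense. The genuinely hard part — exactly as in the pure Navier–Stokes case — is the compressible limit: proving strong convergence of $\rho$. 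This requires establishing the effective viscous flux identity $\overline{\rho^\gamma u\cdot\nu}-\ldots$, i.e. that $\overline{p(\rho)\,\mathrm{div}\,u}-\overline{p(\rho)}\,\overline{\mathrm{div}\,u}$ equals the corresponding product of weak limits up to the viscosity constant, then controlling the amplitude of oscillations via the oscillations defect measure $\mathrm{osc}_{\gamma+1}[\rho_n\to\rho]$ and renormalizing the continuity equation (Friedrichs commutator lemma of DiPerna–Lions) to close the argument; the nonhomogeneous boundary condition on $\Gamma_{\mathrm{in}}$ forces the boundary terms $\int_{\Gamma_{\mathrm{in}}}\rho^\gamma|u_B\cdot\nu|$ and $\int_{\Gamma_{\mathrm{in}}}\rho^{\gamma-1}\rho_B|u_B\cdot\nu|$ to appear and must be tracked carefully through all three limits, which is where the coupling with the kinetic forcing $j-nu$ interacts with the Feireisl estimates. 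The constraint $\gamma>3/2$ enters precisely here, guaranteeing $\rho u\otimes u$ is equi-integrable and that the defect measure controls the pressure oscillations. I would borrow the boundary-adapted versions of these tools from \cite{CJN} essentially verbatim, the only new point being that the extra source term in the momentum equation is bounded in $L^2(0,T;L^{6/5})$ uniformly (from $n\in L^\infty(L^1\cap L^\infty)\subset L^\infty(L^{5/3})$ and $j\in L^\infty(L^{5/4})$ via the kinetic-energy bound, together with $u\in L^2(H^1)\subset L^2(L^6)$), hence does not disturb any of the compactness arguments.
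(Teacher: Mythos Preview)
Your overall strategy matches the paper's: multi-layer approximation with artificial viscosity $\varepsilon\Delta\rho$, artificial pressure $\delta\rho^\beta$, Galerkin projection, then limits $N\to\infty$, $\varepsilon\to0$, $\delta\to0$, with the compressible limits handled essentially verbatim from \cite{CJN} and the kinetic limits via velocity averaging. The paper differs from your sketch in two technical but non-trivial respects at the Galerkin level. First, the fixed point is organized differently: rather than solving the \emph{nonlinear} kinetic equation for a given $u$ (which would itself require a fixed point to handle $L[f]$), the paper follows \cite{KMT} and decomposes $L[f]=E-qv$ with $E=\int Kfw\,dwdy$, $q=\int Kf\,dwdy$, then runs a single Leray--Schauder fixed point on the triple $(E,q,\tilde u)\in [L^\infty]^2\times L^2$; at each step the kinetic equation is \emph{linear} in $f$ and solvable directly. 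Compactness of this map uses Aubin--Lions (not Arzel\`a--Ascoli as in \cite{KMT}), the Lipschitz hypothesis on $K$ to bound $\nabla_x\!\int Kfw$, and the assumption $K|_{y\in\Sigma^+}=0$ to kill the boundary term in $\partial_t\!\int Kfw$. Second, the paper adds two regularizations you omit: a cutoff $\chi_{\{|\tilde u|\le N\}}$ in both the kinetic drift and the drag force (so the frozen coefficients are bounded and the map is defined on all of $L^2$), and a $p$-Laplacian term $\varepsilon\,\mathrm{div}(|\nabla(u-u_\infty)|^2\nabla(u-u_\infty))$ in the momentum equation, which yields $\nabla(u-u_\infty)\in L^4$ and is precisely what allows the coupling term $\int f(v-\tilde u)\chi\cdot(u_N-u_\infty)$ in the approximate energy inequality to be absorbed uniformly in the Leray--Schauder homotopy parameter; without it, your $L^2(H^1)$ bound on $u$ alone does not obviously close this estimate.
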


\begin{remark}
 When $u_B\equiv 0$  in \eqref{ub}, we don't need the boundary condition  \eqref{rhob} for $\rho$. And
 our results still hold for this case and the proof {is} much simpler just by taking $u_\infty=u_B=0$ and
 $\rho_B=0$ in our arguments below.
\end{remark}

The existence of weak solutions is proved though a careful construction of approximation solutions and further compactness argument by using the energy estimate. The most technical part is to find an approximation solution sequence which satisfies the energy estimate. Compared to the analysis on the Vlasov-Fokker-Planck-Navier-Stokes equations in \cite{LL},
the main difficulties come from the nonlinear alignment force $fL[f]$ in $\eqref{CSNS}_1$.
This additional term makes it impossible to use the weak existence result for linear Vlasov-Fokker-Planck system obtained
in \cite{C} directly to construct approximate solutions.
To overcome these difficulties, we adopt some methods developed in \cite{KMT} and apply a special case of Leray-Schauder fixed
point theorem. On the one hand, because of the coupling with Navier-Stokes system, the approximate problem of the Cucker-Smale equation and the definition of fixed point
operator are much more complicated than those used in \cite{KMT}, where no fluid system is coupled.
On the other hand, due to the boundary effect, the compactness of the operator has to be proved with the help of Aubin-Lions lemma instead of Arzela-Ascoli Theorem. More details can be found in Section \ref{app}.
The additional nonlinear alignment force $fL[f]$ brings also further difficulties in the compactness argument, i.e. taking limit in regularized problem. The velocity-averaging lemma and the energy inequality make it possible to finish the argument.

The arrangement of this paper is as follows.
The approximate solutions to problem \eqref{CSNS}-\eqref{ub} are constructed in Section \ref{app}.
Furthermore, the existence of weak solutions to problem \eqref{CSNS}-\eqref{ub} is obtained
by means of uniform estimates and compactness argument in Section \ref{l}.

\section{Approximate solutions}\label{app}

We use many of the regularization terms already used in the literature \cite{CJN}, where only the compressible Navier-Stokes system has been studied, and obtain the following approximate system:
\begin{align}
&\partial_tf+v\cdot \nabla_xf+{\rm{div}}_v\big((u-v)f\big)+{\rm{div}}_v(fL[f])-\Delta_vf=0,\label{3} \\
&\partial_t\rho+{\rm{div}}(\rho u)=\varepsilon\Delta\rho,\label{5} \\
&\partial_t(\rho u)+{\rm{div}}(\rho u\otimes u)+\nabla\rho^{\gamma}+\delta\nabla\rho^\beta
+\varepsilon\nabla\rho\cdot\nabla u
\nonumber\\
&\qquad\quad ={\rm{div}}\mathbb{S}(\nabla u)
+\varepsilon{\rm{div}}(|\nabla(u-u_\infty)|^2\nabla(u-u_\infty))
-\int_{\mathbb{R}^3}(u-v)f\,dv,\label{6}
\end{align}
with the initial {condition}:
\begin{align}\label{7}
\big(f(0,x,v),\rho(0,x),u(0,x)\big)=(f_{0}(x,v),\rho_0(x),u_0(x)\big),
\end{align}
and the boundary conditions:
\begin{align}
&\gamma^-f(t,x,v)\big|_{(0,T)\times\Sigma^-}=g(t,x,v),\label{8}\\
&(-\varepsilon\nabla\rho+\rho u)\cdot \nu(x)\big|_{(0,T)\times\partial\Omega}=
\left\{
       \begin{array}{ll}
       \rho_Bu_B\cdot \nu(x) & \;\;\;{\rm{on}}\;\;\;\Gamma_{\rm{in}},\\
       \rho u_B\cdot \nu(x)&\;\;\;{\rm{on}}\;\;\;\partial\Omega\backslash\Gamma_{\rm{in}},
       \end{array}
\right.\label{9}\\
&u(t,x)\big|_{(0,T)\times\partial \Omega}=u_B(x),\label{10}
\end{align}
where $\varepsilon>0$, $\delta>0$ and $\beta>{\rm{max}}\{\gamma,9/2\}$,
and the initial and boundary data satisfy \eqref{81}-\eqref{83} and
\begin{equation}\label{11}
\left\{
\begin{aligned}
  &u_0\in L^2(\Omega),\quad \rho_0\in W^{1,2}(\Omega),\\
  &0<\underline{\rho}\leq\rho_0(x)\leq\bar{\rho}<\infty,\quad x\in\Omega,\\
  &0<\underline{\rho}\leq\rho_B(x)\leq\bar{\rho}<\infty,\quad x\in\Gamma_{\rm{in}}.
\end{aligned}
\right.
\end{equation}
We still use $(\rho_0,u_0,\rho_B)$ to denote the initial and boundary data to system \eqref{3}-\eqref{6} and will emphasize the dependence
of the parameter if it is needed.

Section 2 is intended to prove that the solutions of problem \eqref{3}-\eqref{10} exist. To achieve this, by using Galerkin's method we build up an approximation of this solution in subsection \ref{N} and then show that this sequence has an accumulation point which solves problem \eqref{3}-\eqref{10} in subsection \ref{lN}.

\subsection{Approximate solutions to problem \eqref{3}-\eqref{10}}\label{N}

To establish the existence of approximate solutions to problem \eqref{3}-\eqref{10} we employ the Galerkin's method.
We introduce a finite dimensional space $X={\rm{span}}\{\Psi_i\}_{i=1}^N$, where the smooth functions
$\Psi_i(x)\,(1\leq i\leq N)$ are orthonormal in $L^2(\Omega)$.
For a given triplet $(E,q,\tilde u)\in \{L^\infty((0,T)\times\Omega)\}^2\times L^2((0,T)\times\Omega)$, we consider
\begin{align}
&\partial_t f+v\cdot \nabla_xf+{\rm{div}}_v((E-qv)f)-\Delta_v f+{\rm{div}}_v\big((\tilde u\chi_{\{|\tilde u|\leq N \}}-v)f \big)=0,\label{r1}\\
&f|_{t=0}= f_{0,N},\quad \gamma^-f|_{(0,T)\times\Sigma^-}= g_N,\label{r2}\\
&\partial_t\rho+{\rm{div}}_x(\rho u)=\varepsilon\Delta_x\rho, \label{108}\\
&\rho|_{t=0}=\rho_0(x),\quad
(-\varepsilon\nabla\rho+\rho u)\cdot \nu(x)\big|_{(0,T)\times\partial\Omega}=
\left\{
       \begin{array}{llr}
       \rho_Bu_B\cdot \nu(x)& \;\;{\rm{on}}\;\;\Gamma_{\rm{in}},\\
       \rho u_B\cdot \nu(x)&\;\;{\rm{on}}\;\;\partial\Omega\backslash\Gamma_{\rm{in}},
       \end{array}
\right.\label{109}\\
&\partial_t(\rho u)+{\rm{div}}(\rho u\otimes u)+\nabla\rho^{\gamma}+\delta\nabla\rho^{\beta}+\varepsilon\nabla\rho\cdot\nabla u\nonumber\\
&\qquad ={\rm{div}}\mathbb{S}(\nabla u)+\varepsilon{\rm{div}}(|\nabla(u-u_\infty)|^{2}\nabla(u-u_\infty))
-\chi_{\{|\tilde u|\leq N \}}\int_{\mathbb{R}^3}(\tilde u-v)f\,dv, \label{110}\\
&u|_{t=0}=u_0(x),\quad u(t,x)\big|_{(0,T)\times\partial\Omega}=u_B(x),\label{111}
\end{align}
where $\chi:\mathbb{R}_+\rightarrow [0,1]$ is continuous indicator function. The initial and boundary conditions $f_{0,N}$ and $g_N$ are taken to be approximate sequences of $f_0$ and $g$, respectively. Furthermore, they satisfy \eqref{81}-\eqref{83} uniformly with respect to $N$ and
\begin{gather*}
\int_{\Omega\times\mathbb{R}^3}|v|^\kappa f_{0,N}(x,v)\,dxdv<\infty,\quad \forall\; \kappa\in[0,\kappa_0]\;\; {\rm{with}}\;\; \kappa_0\geq 5,\\
\int_0^T\int_{\Sigma^-}|v|^\kappa g_{N}|v\cdot\nu(x)|\,d\sigma(x)dvdt<\infty,\quad \forall\; \kappa\in[0,\kappa_0]\;\; {\rm{with}}\;\; \kappa_0\geq 5.
\end{gather*}
Based on the result obtained in \cite{LLS}, problem \eqref{r1}-\eqref{r2} possesses a unique global weak solution
$f\in C([0,T];L^1(\Omega\times\mathbb{R}^3))$ satisfying
\begin{align}
&\frac{d}{dt}\int_{\Omega\times\mathbb{R}^3}f\,dxdv=-\int_{\Sigma^\pm}(v\cdot\nu(x))\gamma^\pm f\,d\sigma(x)dv,\label{estimate1}\\
&\|f\|_{L^\infty((0,T)\times\Omega\times\mathbb{R}^3)}\leq
e^{\frac{C_1 T}{p'}}(\|f_{0,N}\|_{L^\infty(\Omega\times\mathbb{R}^3)}+\|g_N\|_{L^\infty((0,T)\times\Sigma^-)}),
\label{estimate2}\\
&\frac{d}{dt}\int_{\Omega\times\mathbb{R}^3}|v|^\kappa f\,dxdv+\int_{\Sigma^\pm}(v\cdot\nu(x))|v|^\kappa\gamma^\pm f\,d\sigma(x)dv
-\kappa(\kappa+1)\int_{\Omega\times\mathbb{R}^3}|v|^{\kappa-2}f\,dxdv\nonumber\\
&=\kappa\int_{\Omega\times\mathbb{R}^3}\big(f|v|^{\kappa-2}(E+\tilde u\chi_{\{|\tilde u|\leq N \}})\cdot v
-f|v|^\kappa (q+1)\big)\,dxdv,\quad 1\leq \kappa\leq \kappa_0,
\label{estimate3}
\end{align}
where $C_1$ is a positive constant dependent on $\|q\|_{L^\infty((0,T)\times\Omega)}$.
In order to deal with the fluid system, we need the following lemma.
\begin{lemma}\label{1'}
If one has
\begin{align*}
\|f\|_{L^\infty((0,T)\times\Omega\times\mathbb{R}^3)}&\leq M, \\
\int_{\Omega\times \mathbb{R}^3}|v|^\kappa f(t,x,v)\,dxdv&\leq M,\;\; t\in[0,T],\;\; \kappa\in[0,\kappa_0],
\end{align*}
then there exists a constant $C(M)$ such that
\begin{align*}
\|n(t)\|_{L^p(\Omega)}&\leq C(M), \quad p\in\Big[1,\frac{\kappa_0+3}{3}\Big],\\
\|j(t)\|_{L^p(\Omega)}&\leq C(M), \quad p\in\Big[1,\frac{\kappa_0+3}{4}\Big],
\end{align*}
for all $t\in[0,T]$.
\end{lemma}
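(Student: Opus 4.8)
\emph{Proof idea (sketch of the approach I would take).}
The plan is to deduce both bounds from a pointwise-in-$(t,x)$ estimate obtained by splitting the velocity integral at a radius $R>0$ to be optimized, and then to integrate in $x$. The starting observation is that the second hypothesis says exactly that the moment density $m_\kappa(t,x):=\int_{\mathbb{R}^3}|v|^\kappa f(t,x,v)\,dv$ satisfies $\|m_\kappa(t)\|_{L^1(\Omega)}\leq M$ for every $t\in[0,T]$ and every $\kappa\in[0,\kappa_0]$, so that no integrability of $f$ in $v$ beyond the assumed $L^\infty$ bound is needed.

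First I would fix $t$ and $x$ and estimate, for arbitrary $R>0$,
\begin{align*}
n(t,x)=\int_{|v|\leq R}f\,dv+\int_{|v|>R}f\,dv\leq \|f\|_{L^\infty}|B_R|+R^{-\kappa}\int_{|v|>R}|v|^\kappa f\,dv\leq CMR^3+R^{-\kappa}m_\kappa(t,x).
\end{align*}
Choosing $R=\big(m_\kappa(t,x)/M\big)^{1/(\kappa+3)}$ to balance the two terms yields the pointwise bound $n(t,x)\leq CM^{\kappa/(\kappa+3)}m_\kappa(t,x)^{3/(\kappa+3)}$; raising this to the power $(\kappa+3)/3$ and integrating over $\Omega$ gives $\|n(t)\|_{L^{(\kappa+3)/3}(\Omega)}^{(\kappa+3)/3}\leq C(M)\|m_\kappa(t)\|_{L^1(\Omega)}\leq C(M)$. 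Since $\Omega$ is bounded, H\"older's inequality then upgrades this to $\|n(t)\|_{L^p(\Omega)}\leq C(M)$ for every $p\in[1,(\kappa+3)/3]$, uniformly in $t$. The argument for $j$ is the same: using $\int_{|v|\leq R}|v|\,dv\leq CR^4$ one gets $|j(t,x)|\leq CMR^4+R^{-(\kappa-1)}m_\kappa(t,x)$, the same choice of $R$ (the common scaling $R^{\kappa+3}\sim m_\kappa/M$ is what makes both estimates work) gives $|j(t,x)|\leq CM^{(\kappa-1)/(\kappa+3)}m_\kappa(t,x)^{4/(\kappa+3)}$, and integrating the $(\kappa+3)/4$-th power over $\Omega$ together with boundedness of $\Omega$ yields $\|j(t)\|_{L^p(\Omega)}\leq C(M)$ for all $p\in[1,(\kappa+3)/4]$.

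I do not expect any genuine obstacle here; the estimate is of a standard interpolation type. The only points needing a little care are the optimization of the truncation radius and the resulting exponent bookkeeping, and the remark that the $j$-bound is only claimed in the range $\kappa\geq 1$ (for $\kappa<1$ the exponent $(\kappa+3)/4$ is below $1$ and the assertion is vacuous in view of the boundedness of $\Omega$).
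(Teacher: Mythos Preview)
Your argument is correct and is essentially the same as the paper's: the paper also splits the $v$-integral at a radius $G$, bounds the small-$v$ part by $\|f\|_{L^\infty}(2G)^3$ and the large-$v$ part by $G^{-\kappa_0}\int|v|^{\kappa_0}f\,dv$, chooses $G=\big(\int|v|^{\kappa_0}f\,dv\big)^{1/(\kappa_0+3)}$, and then integrates the resulting pointwise inequality over $\Omega$. The only cosmetic differences are that the paper works directly with the top moment $\kappa_0$ (rather than a generic $\kappa$) and does not normalize by $M$ in the choice of cutoff radius.
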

\begin{proof}
Notice that
\begin{align*}
n=\int_{\{|v|>G\}}\frac{G^{\kappa_0}}{G^{\kappa_0}}f\,dv+\int_{\{|v|\leq G\}}f\,dv
\leq \frac{1}{G^{\kappa_0}}\int_{\mathbb{R}^3}|v|^{\kappa_0}f\,dv+\|f\|_{L^\infty(\mathbb{R}^3)}(2G)^3.
\end{align*}
Taking
\begin{align*}
G=\Big(\int_{\mathbb{R}^3}|v|^{\kappa_0}f\,dv\Big)^\frac{1}{\kappa_0+3},
\end{align*}
we obtain
\begin{align*}
\|n\|_{L^{\frac{\kappa_0+3}{3}}(\Omega)}^{\frac{\kappa_0+3}{3}}
&\leq \int_{\Omega} (1+8\|f\|_{L^\infty(\mathbb{R}^3)})^{\frac{\kappa_0+3}{3}}\Big(\int_{\mathbb{R}^3}|v|^{\kappa_0}f\,dv \Big)\,dx\\
&\leq \Big(1+8\|f\|_{L^\infty(\Omega\times\mathbb{R}^3)} \Big)^{\frac{\kappa_0+3}{3}}\int_{\Omega\times\mathbb{R}^3}|v|^{\kappa_0}f\,dvdx.
\end{align*}
An argument similar to the one used above shows that the second statement of Lemma \ref{1'} holds.
\end{proof}
Now we turn to Navier-Stokes system \eqref{108}-\eqref{111}
when $f$ is the weak solution to problem \eqref{r1}-\eqref{r2}.
Due to \eqref{estimate2}, \eqref{estimate3}, and Lemma \ref{1'}, we have
\begin{align*}
\Big\| \chi_{\{|\tilde u|\leq N \}}\int_{\mathbb{R}^3}(\tilde u-v)f\,dv\Big\|_{L^\infty(0,T;L^2(\Omega))}\leq C.
\end{align*}
Thanks to this observation, we perform the same reasoning as the that in \cite{CJN} to construct a unique approximate solution $(\rho_N,u_N)$
to problem \eqref{108}-\eqref{111}.
And $u_N\in C([0,T];X)$ can be written as
\begin{align*}
u_N(t,x)=\sum^N_{i=1}\zeta_i(t)e_i(x),
\end{align*}
where $\zeta_i(t)$ $(i=1,2,\cdot\cdot\cdot, N)$ are functions of $t$.
Furthermore, $(\rho_N,u_N)$ verifies, for any $(\tau,x)\in (0,T)\times\Omega$,
\begin{equation}\label{123}
\inf_{x\in\Omega}\rho_0(x)e^{-\int_0^T\|{\rm{div}}u_N\|_{L^\infty(\Omega)}\,dt}\leq \rho_N(\tau,x)\leq
\sup_{x\in \Omega}\rho_0(x) e^{\int_0^T\|{\rm{div}}u_N\|_{L^\infty(\Omega)}\,dt},
\end{equation}
and,
\begin{align}\label{122}
&\int_{\Omega}\Big(\frac{1}{2}\rho_{N}|u_{N}-u_\infty|^2+\frac{1}{\gamma-1}\rho_{N}^{\gamma}
+\frac{\delta}{\beta-1}\rho_{N}^{\beta}+\frac{1}{2}\rho_{N}^{2}\Big)\,dx
+\varepsilon\int_0^{\tau}\int_{\Omega}|\nabla\rho_{N}|^2\,dxdt\nonumber\\
&+\frac{1}{2}\int^\tau_0\int_{\partial\Omega}\rho_{N}^2|u_B\cdot \nu(x)|\,d\sigma(x)dt
+\varepsilon\int^{\tau}_0\int_{\Omega}(\gamma\rho_{N}^{\gamma-2}+\delta\beta\rho_{N}^{\beta-2})|\nabla\rho_{N}|^2\,dxdt\nonumber\\
&+\int_0^{\tau}\int_{\Omega}\mathbb{S}(\nabla(u_N-u_\infty)):\nabla(u_N-u_\infty)\,dxdt
+\int_0^\tau\int_{\Gamma_{\rm{out}}}\Big(\frac{1}{\gamma-1}\rho_{N}^\gamma+\frac{\delta}{\beta-1}\rho_{N}^\beta \Big)|u_B\cdot \nu(x)|\,d\sigma(x)dt
\nonumber\\
&+\varepsilon\int_0^\tau\int_{\Omega}|\nabla(u_{N}-u_\infty)|^4\,dxdt
+\int_0^\tau\int_{\Gamma_{\rm{in}}}(\rho_{N}^\gamma+\delta\rho_{N}^\beta)|u_B\cdot\nu(x)|\,d\sigma(x)dt
\nonumber\\
\leq\;&\int_{\Omega}\Big(\frac{1}{2}\rho_0|u_0-u_\infty|^2+\frac{1}{\gamma-1}\rho^{\gamma}_0
+\frac{\delta}{\beta-1}\rho_0^{\beta}+\frac{1}{2}\rho_0^2\Big)\,dx
+\int^\tau_0\int_{\Gamma_{\rm{in}}}\rho_{N}\rho_B|u_B\cdot \nu(x)|\,d\sigma(x)dt \nonumber\\
&+\int^{\tau}_0\int_{\Gamma_{\rm{in}}}\Big(\frac{\gamma}{\gamma-1}\rho_{N}^{\gamma-1}
+\frac{\delta\beta}{\beta-1}\rho_{N}^{\beta-1}\Big)\rho_B|u_B\cdot \nu(x)|\,d\sigma(x)dt
-\frac{1}{2}\int^{\tau}_0\int_{\Omega}\rho_{N}^2{\rm{div}}u_{N}\,dxdt\nonumber\\
&+\int^{\tau}_0\int_{\Omega}\big(-(\rho_{N}^{\gamma}+\delta\rho_{N}^{\beta}){\rm{div}}u_\infty
-\mathbb{S}(\nabla u_\infty):\nabla(u_{N}-u_\infty)
-\rho_{N} u_{N}\cdot\nabla u_\infty\cdot(u_{N}-u_\infty)\nonumber\\
&\qquad+\varepsilon\nabla\rho_{N}\cdot\nabla(u_{N}-u_\infty)\cdot u_\infty  \big)\,dxdt
+\int^{\tau}_0\int_{\Omega\times\mathbb{R}^3}f(v-\tilde{u})\chi_{\{|\tilde u|\leq N \}}\cdot (u_{N}-u_\infty)\,dxdvdt.
\end{align}
This result {enables} us to define an operator $\mathscr{T}$:
\begin{align*}
\mathscr{T}:[L^\infty((0,T)\times\Omega)]^2\times L^2((0,T)\times\Omega)&\rightarrow [L^\infty((0,T)\times\Omega)]^2\times L^2((0,T)\times\Omega),\\
(E,q,\tilde u)&\mapsto (\mathscr{T}_1,\mathscr{T}_2,\mathscr{T}_3),
\end{align*}
where
\begin{align*}
&\mathscr{T}_1(E,q,\tilde u)=\int_{\Omega\times\mathbb{R}^3}K(x,y)f(t,y,w)w\,dwdy,\\
&\mathscr{T}_2(E,q,\tilde u)=\int_{\Omega\times\mathbb{R}^3}K(x,y)f(t,y,w)\,dwdy,\\
&\mathscr{T}_3(E,q,\tilde u)=u_N.
\end{align*}
Therefore, the existence of the approximate solution to problem \eqref{3}-\eqref{10} is reduced to prove the existence of a fixed point for the operator $\mathscr{T}$.
We will use the following special case of Leray-Schauder fixed point theorem {\cite{GT}}.
\begin{lemma}\label{LS}
Let $\mathscr{T}$ be a continuous and compact mapping of a Banach space $\mathscr{B}$ into itself. Suppose there exists a constant $M$ such that
\begin{align*}
\|w\|_{\mathscr{B}}<M,
\end{align*}
for all $w\in\mathscr{B}$ and $\alpha\in [0,1]$ satisfying $w=\alpha\mathscr{T}w$, then $\mathscr{T}$ has a fixed point.
\end{lemma}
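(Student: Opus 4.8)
The plan is to deduce this statement from Schauder's fixed point theorem together with a radial truncation onto the ball of radius $M$. First I would note that necessarily $M>0$: taking $\alpha=0$ and $w=0$ one has $w=\alpha\mathscr{T}w$, so the hypothesis already forces $0=\|0\|_{\mathscr{B}}<M$. Write $\bar B_M=\{w\in\mathscr{B}:\|w\|_{\mathscr{B}}\leq M\}$ for the closed ball, which is a nonempty, closed, convex subset of $\mathscr{B}$, and let $r_M:\mathscr{B}\to\bar B_M$ be the radial retraction $r_M(y)=y$ for $\|y\|_{\mathscr{B}}\leq M$ and $r_M(y)=(M/\|y\|_{\mathscr{B}})\,y$ for $\|y\|_{\mathscr{B}}>M$. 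This $r_M$ is continuous on $\mathscr{B}$ (the two formulas agree on $\|y\|_{\mathscr{B}}=M$, and each is continuous on its closed domain, so continuity follows by gluing).

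Next I would introduce the composed map $\mathscr{S}:=r_M\circ\mathscr{T}:\bar B_M\to\bar B_M$. It is continuous, being a composition of continuous maps; moreover $\mathscr{S}(\bar B_M)=r_M\big(\mathscr{T}(\bar B_M)\big)$ is relatively compact, since $\mathscr{T}(\bar B_M)$ is precompact ($\mathscr{T}$ being a compact mapping and $\bar B_M$ bounded) and the continuous image of a relatively compact set is relatively compact. Schauder's fixed point theorem then yields some $w\in\bar B_M$ with
\[
w=\mathscr{S}(w)=r_M(\mathscr{T}w).
\]

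It remains to upgrade this to a genuine fixed point of $\mathscr{T}$, namely to prove $\|\mathscr{T}w\|_{\mathscr{B}}\leq M$, for then $r_M(\mathscr{T}w)=\mathscr{T}w$ and hence $w=\mathscr{T}w$. Suppose instead that $\|\mathscr{T}w\|_{\mathscr{B}}>M$. Then, by the definition of $r_M$,
\[
w=r_M(\mathscr{T}w)=\frac{M}{\|\mathscr{T}w\|_{\mathscr{B}}}\,\mathscr{T}w=\alpha\,\mathscr{T}w,\qquad \alpha:=\frac{M}{\|\mathscr{T}w\|_{\mathscr{B}}}\in(0,1),
\]
and taking norms gives $\|w\|_{\mathscr{B}}=M$. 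But $w$ now satisfies $w=\alpha\mathscr{T}w$ with $\alpha\in[0,1]$, so the a priori bound in the hypothesis forces $\|w\|_{\mathscr{B}}<M$, a contradiction. Hence $\|\mathscr{T}w\|_{\mathscr{B}}\leq M$ and $w=\mathscr{T}w$, which proves the lemma.

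The only genuinely nontrivial ingredient is Schauder's fixed point theorem for continuous compact self-maps of closed convex sets (itself obtained from Brouwer's theorem via finite-dimensional approximation), and that is precisely where the compactness hypothesis on $\mathscr{T}$ is used; everything else is bookkeeping around the truncation. The point one must get right is that the a priori estimate is a \emph{strict} inequality $\|w\|_{\mathscr{B}}<M$: this is exactly what excludes the boundary case $\|w\|_{\mathscr{B}}=M$ produced by the retraction, so the argument would break down if only $\|w\|_{\mathscr{B}}\leq M$ were assumed there.
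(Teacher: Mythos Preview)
Your argument is correct: this is the standard derivation of the Schaefer/Leray--Schauder alternative from Schauder's theorem via the radial retraction onto $\bar B_M$, and you have handled the only delicate point (the strict inequality ruling out $\|w\|_{\mathscr{B}}=M$) properly.

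As for comparison with the paper: there is nothing to compare, since the paper does not prove this lemma at all but simply quotes it from Gilbarg--Trudinger \cite{GT}. Your proof is precisely the one given there (Theorem~11.3), so in substance you have reproduced the cited source.
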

Let $\{(E_i,q_i,\tilde u_i) \}$ be uniformly bounded in $[L^\infty((0,T)\times\Omega)]^2\times L^2((0,T)\times\Omega)$
and $\{(f_i,\rho_N^i,u_N^i)\}$ be the corresponding sequence of solutions to system \eqref{r1}-\eqref{111}.
Following similar argument to that has been used in subsection 2.3 of \cite{LL}, the estimates in \eqref{123}-\eqref{122} imply that
\begin{align}
&\|u_N^i\|_{L^2(0,T;H^1(\Omega))}\leq C_2,\label{124}\\
&\|\partial_t u_N^i\|_{L^p(0,T;W^{-1,p}(\Omega))}\leq C_2\quad {\rm{for \;\;some\;\;}} p\in(1,\infty),\label{125}
\end{align}
where $C_2$ depends on $N$, but is independent of $i$.
The inequalities \eqref{124} and \eqref{125} allow us to use Aubin-Lions lemma to get
\begin{align*}
u_N^i\rightarrow u_N \quad {\rm{in}}\quad L^2((0,T)\times\Omega) \quad {\rm{as}}\quad i\rightarrow+\infty.
\end{align*}

From \eqref{estimate1}-\eqref{estimate3}, we have
\begin{align}\label{126}
&\Big\|\nabla_x\int_{\Omega\times\mathbb{R}^3}K(x,y)f_i(t,y,w)w\,dwdy \Big\|_{L^\infty((0,T)\times\Omega)}\nonumber\\
\leq &\sup_{x,y}|\nabla_x K(x,y)|\sup_t\Big(\int_{\Omega\times\mathbb{R}^3}|w|^2f_i\,dwdy \Big)^\frac{1}{2}
\Big(\int_{\Omega\times\mathbb{R}^3}f_i\,dwdy \Big)^\frac{1}{2}\nonumber\\
\leq&\, C_2.
\end{align}
By means of \eqref{r1} and the assumption $K(x,y)\big|_{y\in \Sigma^+}=0$,
we obtain that
\begin{align}\label{127}
&\Big\|\partial_t\int_{\Omega\times\mathbb{R}^3}K(x,y)f_i(t,y,w)w\,dwdy \Big\|_{L^\infty((0,T)\times\Omega)}\nonumber\\
 \leq&\,  \Big\|\int_{\Omega\times\mathbb{R}^3}K(x,y)|w|^2\nabla_y f_i(t,y,w)\,dwdy  \Big\|_{L^\infty((0,T)\times\Omega)}\nonumber\\
&+ C_2\,\Big\|\int_{\Omega\times\mathbb{R}^3}K(x,y)(E_if_i-q_iwf_i)\,dwdy \Big\|_{L^\infty((0,T)\times\Omega)}\nonumber\\
&+C_2\,\Big\|\int_{\Omega\times\mathbb{R}^3}\big(\tilde u_i\chi_{\{|\tilde u_i|\leq N \}}f_i-wf_i \big)\,dwdy \Big\|_{L^\infty((0,T)\times\Omega)}\nonumber\\
\leq&\, C_2.
\end{align}
The inequalities \eqref{126}-\eqref{127} and the fact that $W^{1,\infty}(\Omega)\hookrightarrow\hookrightarrow L^\infty(\Omega)$
allow us to apply Aubin-Lions lemma to
infer that
\begin{align*}
\int_{\Omega\times\mathbb{R}^3}K(x,y)f_i(t,y,w)w\,dwdy\rightarrow
\int_{\Omega\times\mathbb{R}^3}K(x,y)f(t,y,w)w\,dwdy\quad {\rm{in}}\quad  L^\infty((0,T)\times\Omega).
\end{align*}
Actually, Aubin-Lions lemma makes it possible to yield the strong convergence in $L^\infty((0,T)\times\Omega)$ of the left-hand side of the above relation towards some limit $\xi$. Separately, the estimates in \eqref{estimate2}, \eqref{estimate3} and boundedness of $w f_i$ in $L^5((0,T)\times\Omega)$ imply that (up to subsequences) the above convergence is also weakly in the same space. Therefore, one can identify that $\xi$ is exactly $\int_{\Omega\times\mathbb{R}^3}K(x,y)f(t,y,w)w\,dwdy$.

Similarly, we can derive that
\begin{align*}
\int_{\Omega\times\mathbb{R}^3}K(x,y)f_i(t,y,w)\,dwdy\rightarrow
\int_{\Omega\times\mathbb{R}^3}K(x,y)f(t,y,w)\,dwdy\quad {\rm{in}}\quad  L^\infty((0,T)\times\Omega).
\end{align*}
Therefore, we have proved that the operator $\mathscr{T}$ is compact.

For any $(E,q,\tilde u)\in \{L^\infty((0,T)\times\Omega)\}^2\times L^2((0,T)\times\Omega)$ satisfying
$(E,q,\tilde u)=\alpha\mathscr{T}(E,q,\tilde u)$, $\alpha\in [0,1]$,
let $f$ be the weak solution to the following equation
\begin{align*}
\partial_t f+v\cdot \nabla_x f+ {\rm{div}}_v\big( (\alpha u_N \chi_{\{\alpha |u_N|\leq N \}}-v)f\big)
-\Delta_v f+\alpha {\rm{div}}_v(fL[f])=0.
\end{align*}
It is easy to see that
\begin{align*}
&\frac{d}{dt}\int_{\Omega\times\mathbb{R}^3}\frac{|v|^2}{2}f\,dxdv+\int_{\Omega\times\mathbb{R}^3}|v|^2f\,dxdv
+\int_{\Omega\times\mathbb{R}^3}|v-\alpha u_N\chi_{\{\alpha|u_N|\leq N \} }|^2f\,dxdv   \\
&+\frac{\alpha}{2}\int_{\Omega\times\mathbb{R}^3}\int_{\Omega\times\mathbb{R}^3}K(x,y)f(t,x,v)f(t,y,w)|w-v|^2\,dydwdxdv\\
=&3\int_{\Omega\times\mathbb{R}^3}f\,dxdv
-\frac{1}{2}\int_{\Sigma^\pm}(v\cdot\nu(x))|v|^2\gamma^\pm f\,d\sigma(x)dv\\
&-\int_{\Omega\times\mathbb{R}^3}f(v-\alpha u_N\chi_{\{\alpha|u_N|\leq N \}})\cdot \alpha u_N\chi_{\{\alpha|u_N|\leq N \}}\,dxdv.
\end{align*}
Owing to the above equality and \eqref{estimate1}, we have
\begin{align*}
\left\||v|^2f \right\|_{L^\infty(0,T;L^1(\Omega\times\mathbb{R}^3))}\leq C_3,
\end{align*}
where $C_3$ depends on $N$, but is independent of $\alpha$.
Together with \eqref{estimate1}, we conclude that
\begin{align*}
\|E\|_{L^\infty((0,T)\times\Omega)}&\leq \|\mathscr{T}_1(E,q,\tilde u)\|_{L^\infty((0,T)\times\Omega)}\\
&\leq\sup_{x,y}|K(x,y)|\sup_t\Big(\int_{\Omega\times\mathbb{R}^3}|w|^2f\,dwdy\Big)^\frac{1}{2} \Big(\int_{\Omega\times\mathbb{R}^3}f\,dwdy \Big)^\frac{1}{2}\\
&\leq C_3,\\
\|q\|_{L^\infty((0,T)\times\Omega)}
&\leq \sup_{x,y}|K(x,y)|\sup_t\int_{\Omega\times\mathbb{R}^3}f\,dwdy\leq C_3.
\end{align*}
For the fluid part, the last term on the right-hand side of \eqref{122} with $\tilde u=\alpha u_N$ can be estimated in the following way:
\begin{align*}
&\int_0^\tau\int_{\Omega\times\mathbb{R}^3}f(v-\alpha u_N)\chi_{\{\alpha|u_N|\leq N \}}\cdot(u_N-u_\infty)\,dxdvdt\\
\leq &\frac{\varepsilon}{2}\int_0^\tau\int_{\Omega\times\mathbb{R}^3}|\nabla(u_N-u_\infty)|^4\,dxdt
+C_3\Big(\int_0^\tau\int_\Omega |j|^\frac{5}{4}\,dxdt+\int_0^\tau\int_\Omega n^\frac{5}{4}\,dxdt \Big).
\end{align*}
Plugging the above inequality into \eqref{122}, we infer that
\begin{align*}
\|\tilde u\|_{L^2((0,T)\times\Omega)}\leq \|u_N\|_{L^2((0,T)\times\Omega)}\leq C_3.
\end{align*}

Therefore, we can apply Lemma \ref{LS} to yield that there exists a fixed point $(E,q,u_N)$ such that $(E,q,u_N)=\mathscr{T}(E,q,u_N)$.
And the estimates \eqref{estimate1}-\eqref{estimate3} {hold} with
{$$
	q=\int_{\Omega\times\mathbb{R}^3}K(x,y)f_N(t,y,w)\,dwdy,\qquad
	E=\int_{\Omega\times\mathbb{R}^3}K(x,y)f_N(t,y,w)w\,dwdy.
	$$
}
Thus, we arrive at the conclusion that the triplet $(f_N,\rho_N,u_N)$ satisfies\\
1. Weak formulation of the Cucker-Smale equation: for any $\varphi\in C^\infty_c([0,T)\times\bar\Omega\times\mathbb{R}^3)$ such that $\varphi=0$ on $(0,T)\times\Sigma^+$, it holds
\begin{align}\label{119}
&\int^{T}_0\int_{\Omega\times\mathbb{R}^3}f_N\big(\partial_t\varphi+v\cdot\nabla_x\varphi
+(u_{N} \chi_{\{|u_N|\leq N \}}-v)\cdot\nabla_v\varphi
+L[f_N]\cdot \nabla_v\varphi
+\Delta_v\varphi\big)\,dxdvdt\nonumber\\
&=-\int_{\Omega\times\mathbb{R}^3}f_{0,N}\varphi(0,x,v)\,dxdv
+\int_0^T\int_{\Sigma^-}(v\cdot \nu(x))g_N\varphi\,d\sigma(x)dvdt;
\end{align}
2. Weak formulation of the continuity equation: for any $\psi\in C^\infty_c([0,T)\times(\Omega\cup\Gamma_{\rm{in}}))$, it holds
\begin{align}\label{120}
&\int^{T}_0\int_{\Omega}(\rho_{N}\partial_t\psi+\rho_{N} u_{N}\cdot\nabla\psi-\varepsilon\nabla\rho_{N}\cdot \nabla\psi)\,dxdt
+\int_{\Omega}\rho_0\psi(0,x)\,dx\nonumber\\
&\qquad\qquad\qquad=\int_0^T\int_{\Gamma_{\rm{in}}}\rho_B u_B\cdot \nu(x)\psi\,d\sigma(x)dt;
\end{align}
3. Weak formulation of the momentum balance equation: for any $\phi\in C^\infty_c((0,T)\times\Omega;\mathbb{R}^3)$, it holds
\begin{align}\label{121}
\int^{T}_{0}&\int_{\Omega}\big(\rho_{N} u_{N}\cdot\partial_t\phi+(\rho_{N} u_{N}\otimes u_{N}):\nabla\phi+\rho_{N}^\gamma{\rm{div}}\phi
+\delta\rho_{N}^\beta{\rm{div}}\phi
-\varepsilon\nabla\rho_{N}\cdot\nabla u_{N}\phi\nonumber\\
&-\varepsilon|\nabla(u_{N}-u_\infty)|^2\nabla(u_{N}-u_\infty):\nabla\phi
-\mathbb{S}(\nabla u_{N}):\nabla\phi
+(j_N-n_N u_{N})\chi_{\{|u_N|\leq N \}}\cdot\phi\big)\,dxdt\nonumber\\
&+\int_{\Omega}\rho_0u_0\cdot \phi(0,x)\,dx=0.
\end{align}
Furthermore, $(f_N,\rho_N,u_N)$ verifies
\begin{align}
&\partial_t\int_{\mathbb{R}^3}f_N\,dv+{\rm{div}}_x\int_{\mathbb{R}^3}vf_N\,dv=0,\label{112}\\
&\frac{d}{dt}\int_{\Omega\times\mathbb{R}^3}f_N\,dxdv=-\int_{\Sigma^\pm}(v\cdot\nu(x))\gamma^\pm f_N\,d\sigma(x)dv,\label{113}\\
&\|f_N\|_{L^\infty((0,T)\times\Omega\times\mathbb{R}^3)}
\leq e^{\frac{CT}{p'}}(\|f_{0,N}\|_{L^\infty(\Omega\times\mathbb{R}^3)}+\|g_N\|_{L^\infty((0,T)\times\Sigma^-)}),\label{114}
\end{align}
where $C$ depends on $\|K\|_{L^\infty(\Omega\times\Omega)}$,
and for any $\tau \in(0,T)$, it holds
\begin{align}\label{e1}
&\int_{\Omega}\Big(\frac{1}{2}\rho_{N}|u_{N}-u_\infty|^2+\frac{1}{\gamma-1}\rho_{N}^{\gamma}
+\frac{\delta}{\beta-1}\rho_{N}^{\beta}+\frac{1}{2}\rho_{N}^{2}
+\int_{\mathbb{R}^3}\frac{|v|^2}{2}f_N\,dv\Big)\,dx\nonumber\\
&+\varepsilon\int_0^{\tau}\int_{\Omega}|\nabla\rho_{N}|^2\,dxdt
+\frac{1}{2}\int^\tau_0\int_{\partial\Omega}\rho_{N}^2|u_B\cdot \nu(x)|\,d\sigma(x)dt\nonumber\\
&+\int_0^{\tau}\int_{\Omega}\mathbb{S}(\nabla(u_N-u_\infty)):\nabla(u_N-u_\infty)\,dxdt
+\int_0^\tau\int_{\Gamma_{\rm{out}}}\Big(\frac{1}{\gamma-1}\rho_{N}^\gamma+\frac{\delta}{\beta-1}\rho_{N}^\beta\Big)|u_B\cdot\nu(x)|\,d\sigma(x)dt
\nonumber\\
&+\varepsilon\int^{\tau}_0\int_{\Omega}(\gamma\rho_{N}^{\gamma-2}+\delta\beta\rho_{N}^{\beta-2})|\nabla\rho_{N}|^2\,dxdt
+\int^{\tau}_0\int_{\Sigma^-}(v\cdot \nu(x))\frac{|v|^2}{2}g_N\,d\sigma(x)dvdt\nonumber\\
&+\frac{1}{2}\int_0^\tau\int_{\Omega\times\mathbb{R}^3}\int_{\Omega\times\mathbb{R}^3}K(x,y)f_N(t,x,v)f_N(t,y,w)|w-v|^2\,dydwdxdvdt\nonumber\\
&+\varepsilon\int_0^{\tau}\int_{\Omega}|\nabla(u_{N}-u_\infty)|^4\,dxdt
+\int_0^\tau\int_{\Gamma_{\rm{in}}}(\rho_{N}^\gamma+\delta\rho_{N}^\beta)|u_B\cdot\nu(x)|\,d\sigma(x)dt\nonumber\\
\leq\;&\int_{\Omega}\Big(\frac{1}{2}\rho_0|u_0-u_\infty|^2+\frac{1}{\gamma-1}\rho^{\gamma}_0
+\frac{\delta}{\beta-1}\rho_0^{\beta}+\frac{1}{2}\rho_0^2
+\int_{\mathbb{R}^3}\frac{|v|^2}{2}f_{0,N}\,dv\Big)\,dx \nonumber\\
&+\int^{\tau}_0\int_{\Gamma_{\rm{in}}}\Big(\frac{\gamma}{\gamma-1}\rho_{N}^{\gamma-1}
+\frac{\delta\beta}{\beta-1}\rho_{N}^{\beta-1}\Big)\rho_B|u_B\cdot \nu(x)|\,d\sigma(x)dt\,d\sigma(x)dt
-\frac{1}{2}\int^{\tau}_0\int_{\Omega}\rho_{N}^2{\rm{div}}u_{N}\,dxdt\nonumber\\
&+\int^{\tau}_0\int_{\Omega}\big(-(\rho_{N}^{\gamma}+\delta\rho_{N}^{\beta}){\rm{div}}u_\infty
-\mathbb{S}(\nabla u_\infty):\nabla(u_{N}-u_\infty)
-\rho_{N} u_{N}\cdot\nabla u_\infty\cdot(u_{N}-u_\infty)\nonumber\\
&\qquad +\varepsilon\nabla\rho_{N}\cdot\nabla(u_{N}-u_\infty)\cdot u_\infty  \big)\,dxdt
-\int^{\tau}_0\int_{\Omega}(j_N-n_Nu_{N})\chi_{\{|u_N|\leq N \}}\cdot u_\infty\,dxdt\nonumber\\
&+3\int^{\tau}_0\int_{\Omega\times\mathbb{R}^3}f_N\,dxdvdt
+\int_0^\tau\int_{\Gamma_{\rm{in}}}\rho_N\rho_B|u_B\cdot \nu(x)|\,d\sigma(x)dt.
\end{align}

\subsection{Compactness argument for $N\rightarrow +\infty$}\label{lN}

In this subsection, we perform the limit $N\rightarrow +\infty$ in the equalities \eqref{119}-\eqref{121}
with the help of \eqref{112}-\eqref{e1}. The proof is similar to that given in \cite{MV}, except the weak compactness of $f_N L[f_N]$. We will only focus on this term in the following discussion.

We start with the following velocity-averaging lemma, which is proved in \cite{KMT}.
\begin{lemma}\label{CS5}
Let $T>0$, $\{H_N\}$ and $\{G_N\}$ be bounded in $L^p_{loc}((0,T)\times\mathbb{R}^3\times\mathbb{R}^3)$ with $1<p<\infty$.
Suppose that $H_N$ and $G_N$ verify
\begin{align*}
\partial_t H_N+v\cdot \nabla_xH_N={\rm{div}}_v^\alpha G_N,\qquad
H_N|_{t=0}=H_0\in L^p(\mathbb{R}^3\times\mathbb{R}^3),
\end{align*}
for a multiindex $\alpha$, and
\begin{align*}
\sup_N\|H_N\|_{L^\infty((0,T)\times\mathbb{R}^3\times\mathbb{R}^3)}
+\sup_N\|(|x^2|+|v^2|)H_N\|_{L^\infty(0,T;L^1(\mathbb{R}^3\times\mathbb{R}^3))}<\infty,
\end{align*}
then, for any $\varphi(v)$ such that $|\varphi(v)|\leq C|v|$, the sequence $\{\int_{\mathbb{R}^3}H_N\varphi\,dv\}$ is relatively compact in
$L^q((0,T)\times\mathbb{R}^3)$ for any $1\leq q<\frac{5}{4}$.
\end{lemma}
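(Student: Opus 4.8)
The plan is to combine a classical $L^p$ velocity‑averaging lemma for kinetic transport equations with a truncation in the velocity variable, the error of the truncation being controlled by the second moment $\int|v|^2H_N\,dv$, and then to upgrade the resulting local compactness in $x$ to compactness in $L^q((0,T)\times\mathbb{R}^3)$ by using the weight $|x|^2$ in the moment bound to obtain tightness at spatial infinity. The only genuinely hard ingredient, the $L^p$ averaging lemma itself, is quoted from \cite{KMT}.

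\textbf{Step 1: reduction to a compactly supported weight.} Fix $\psi\in C^\infty_c(\mathbb{R}^3)$ with $0\le\psi\le1$, $\psi\equiv1$ on $B_1$ and $\mathrm{supp}\,\psi\subset B_2$, and put $\psi_R(v)=\psi(v/R)$. Write
\begin{align*}
\int_{\mathbb{R}^3}H_N\varphi\,dv=\int_{\mathbb{R}^3}H_N\,\varphi\,\psi_R\,dv+\int_{\mathbb{R}^3}H_N\,\varphi\,(1-\psi_R)\,dv .
\end{align*}
Since $|\varphi(v)|\le C|v|$ and $1-\psi_R$ vanishes on $B_R$,
\begin{align*}
\Big|\int_{\mathbb{R}^3}H_N\,\varphi\,(1-\psi_R)\,dv\Big|\le C\int_{\{|v|\ge R\}}|v|\,H_N\,dv\le\frac{C}{R}\int_{\mathbb{R}^3}|v|^2H_N\,dv ,
\end{align*}
which is bounded by $C/R$ in $L^\infty(0,T;L^1(\mathbb{R}^3_x))$; being also bounded in $L^\infty$, it is bounded by $CR^{-1/q}$ in $L^q((0,T)\times\mathbb{R}^3)$, uniformly in $N$. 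Hence it suffices to prove, for each fixed $R$, that $\{\int_{\mathbb{R}^3}H_N\,\varphi\psi_R\,dv\}_N$ is relatively compact in $L^q((0,T)\times\mathbb{R}^3)$; letting $R\to\infty$ along a diagonal subsequence then finishes the argument.

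\textbf{Step 2: local compactness via the averaging lemma.} After a routine smooth approximation of $\varphi$, the function $\theta:=\varphi\psi_R$ is smooth and compactly supported in $v$. Since $H_N$ and $G_N$ are bounded in $L^p_{loc}((0,T)\times\mathbb{R}^3\times\mathbb{R}^3)$ and $\partial_tH_N+v\cdot\nabla_xH_N=\mathrm{div}_v^\alpha G_N$ with $H_N|_{t=0}=H_0\in L^p$, the $L^p$ velocity‑averaging lemma (cf.\ \cite{KMT}) gives, for any multiindex $\alpha$, that $\{\int_{\mathbb{R}^3}H_N\theta\,dv\}_N$ lies in a bounded subset of a positive‑order fractional Sobolev space in $(t,x)$ over compact sets, hence is relatively compact in $L^p_{loc}((0,T)\times\mathbb{R}^3_x)$. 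Exhausting $(0,T)\times\mathbb{R}^3$ by compact sets and extracting diagonally, we obtain a subsequence converging strongly in $L^p_{loc}((0,T)\times\mathbb{R}^3)$.

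\textbf{Step 3: tightness and passage to $L^q((0,T)\times\mathbb{R}^3)$.} From $\|H_N\|_{L^\infty}\le C$ together with the finiteness of $\int_{\mathbb{R}^3}|v|^2H_N\,dv$ in $L^\infty(0,T;L^1)$, the computation of Lemma \ref{1'} (with $\kappa_0=2$) shows that $j_N:=\int_{\mathbb{R}^3}|v|H_N\,dv$ is bounded in $L^\infty(0,T;L^{5/4}(\mathbb{R}^3))$ and $n_N:=\int_{\mathbb{R}^3}H_N\,dv$ in $L^\infty(0,T;L^{5/3}(\mathbb{R}^3))$; moreover $\int_0^T\!\int_{\mathbb{R}^3}|x|^2n_N\,dx\,dt\le C$ because of the weight $|x|^2$ in the hypothesis. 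Since $\theta$ is supported in $\{|v|\le2R\}$ with $|\theta|\le CR$, we have $|\int H_N\theta\,dv|\le CR\,n_N$ and $|\int H_N\theta\,dv|\le Cj_N$; interpolating the $L^1$ tail bound $\int_{\{|x|>\Lambda\}}n_N\,dx\le C\Lambda^{-2}$ against the uniform $L^{5/4}$ bound yields
\begin{align*}
\sup_N\int_0^T\!\!\int_{\{|x|>\Lambda\}}\Big|\int_{\mathbb{R}^3}H_N\theta\,dv\Big|^q\,dx\,dt\longrightarrow0\qquad(\Lambda\to\infty),\qquad 1\le q<\tfrac{5}{4}.
\end{align*}
Combining this tightness with the strong $L^p_{loc}$ convergence of Step 2 — upgraded to strong $L^q_{loc}$ convergence for $q<\tfrac{5}{4}$ by interpolation with the uniform $L^{5/4}$ bound — gives strong convergence in $L^q((0,T)\times\mathbb{R}^3)$ for every $q<\tfrac{5}{4}$, which together with Step 1 proves the lemma.

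\textbf{Main obstacle.} The technical heart is the $L^p$ averaging lemma with a $v$‑derivative of arbitrary order on the right‑hand side, which we import wholesale from \cite{KMT}. Within the present scheme the delicate point is that $\varphi$ grows linearly in $v$ while averaging lemmas only handle compactly supported weights and only $L^p_{loc}$ bounds are at hand: the velocity cut‑off $\psi_R$ reconciles the two, but its error can be absorbed only through the second velocity moment of $H_N$, and it is precisely this exponent $2$ (via the Lemma \ref{1'}‑type estimate) that pins down the range $q<5/4$; the endpoint $q=5/4$ is lost because only a uniform $L^{5/4}$ bound, not equi‑integrability, is available there.
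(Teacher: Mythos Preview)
The paper does not prove this lemma at all: it is simply stated and attributed to \cite{KMT}. Your proposal therefore supplies considerably more than the paper does, and your three-step strategy --- truncate in $v$, apply a classical $L^p$ averaging lemma to the compactly supported velocity average, then recover global $L^q$ compactness from the $|x|^2$-moment --- is precisely the route taken in \cite{KMT} itself, to which both you and the paper ultimately defer for the core averaging estimate.

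There is one small slip in Step~1. You assert that the tail $\int_{\{|v|\ge R\}}|v|\,H_N\,dv$ is ``also bounded in $L^\infty$'' in $(t,x)$, and then interpolate this against the $L^1$ bound $C/R$ to get $CR^{-1/q}$ in $L^q$. That $L^\infty_{t,x}$ bound is not available: knowing only $\|H_N\|_{L^\infty_{t,x,v}}\le M$ and $\int|v|^2H_N\,dv\in L^\infty_tL^1_x$ gives no pointwise-in-$x$ control on $\int_{|v|\ge R}|v|H_N\,dv$. The repair is exactly the interpolation you already carry out in Step~3: by the Lemma~\ref{1'}-type computation (with $\kappa_0=2$), $\int_{\mathbb{R}^3}|v|H_N\,dv$ --- and hence the tail --- is bounded in $L^\infty(0,T;L^{5/4}(\mathbb{R}^3_x))$ uniformly in $N$; interpolating that $L^{5/4}$ bound against the $L^1$ bound $C/R$ yields smallness of the tail in $L^q((0,T)\times\mathbb{R}^3)$ for every $1\le q<5/4$, which is what you need. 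With this correction your argument is complete.
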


The estimates in \eqref{e1} and Lemma \ref{1'} imply that
\begin{align}
&\left\||v|^2f_N\right\|_{L^\infty(0,T;L^1(\Omega\times\mathbb{R}^3))}\leq C,\label{116}\\
&\|n_N\|_{L^\infty(0,T;L^p(\Omega))}\leq C, \quad p\in\Big[1,\frac{5}{3}\Big],\label{117}\\
&\|j_N\|_{L^\infty(0,T;L^p(\Omega))}\leq C, \quad p\in\Big[1,\frac{5}{4}\Big].\label{118}
\end{align}
Applying Lemma \ref{CS5} with $G_N=\big(\nabla_v f_N-(u_N\chi_{\{|u_N|\leq N \}}-v+L[f_N])f_N\big)\chi_{\{\Omega\times\mathbb{R}^3\}}$
and $H_N=f_N\chi_{\{\Omega\times\mathbb{R}^3\}}$,
we deduce that there exists a subsequence $\{f_N\}$ (not relabeled) such that
\begin{align*}
\int_{\mathbb{R}^3}vf_N\,dv\rightarrow \int_{\mathbb{R}^3}vf\,dv \quad {\rm{in}}\quad L^p((0,T)\times\Omega),\quad p\in \Big[1,\frac{5}{4}\Big).
\end{align*}
Noticing the fact that
\begin{align*}
\int_{\Omega\times\mathbb{R}^3}K(x,y)f_N(t,y,w)\,dwdy\rightharpoonup\int_{\Omega\times\mathbb{R}^3}K(x,y)f(t,y,w)\,dwdy
\quad {\rm{in}}\quad L^p((0,T)\times\Omega),\quad p\in(1,\infty),
\end{align*}
we arrive at the following convergence
\begin{align*}
\int_0^T\int_{\Omega\times\mathbb{R}^3}&vf_N(t,x,v)\Big(\int_{\Omega\times\mathbb{R}^3}K(x,y)f_N(t,y,w)\,dwdy \Big)\cdot \nabla_v\varphi\,dxdvdt\\
&\rightarrow
\int_0^T\int_{\Omega\times\mathbb{R}^3}vf(t,x,v)\Big(\int_{\Omega\times\mathbb{R}^3}K(x,y)f(t,y,w)\,dwdy \Big)\cdot \nabla_v\varphi\,dxdvdt.
\end{align*}
Thanks to the weak lower semicontinuity of convex functions, we are able to take the limit $N\rightarrow+\infty$ in the energy inequality \eqref{e1} and obtain
\begin{prop}\label{p1}
Problem \eqref{3}-\eqref{10} possesses a global weak solution $(f,\rho,u)$ satisfying, for any $\tau\in(0,T)$,
\begin{align}\label{e2}
&\int_{\Omega}\Big(\frac{1}{2}\rho|u-u_\infty|^2+\frac{1}{\gamma-1}\rho^{\gamma}
+\frac{\delta}{\beta-1}\rho^{\beta}+\frac{1}{2}\rho^{2}
+\int_{\mathbb{R}^3}\frac{|v|^2}{2}f\,dv\Big)\,dx\nonumber\\
&+\varepsilon\int_0^{\tau}\int_{\Omega}|\nabla\rho|^2\,dxdt
+\varepsilon\int^{\tau}_0\int_{\Omega}(\gamma\rho^{\gamma-2}+\delta\beta\rho^{\beta-2})|\nabla\rho|^2\,dxdt
+\frac{1}{2}\int^\tau_0\int_{\partial\Omega}\rho^2|u_B\cdot \nu(x)|\,d\sigma(x)dt  \nonumber\\
&+\frac{1}{2}\int_0^\tau\int_{\Omega\times\mathbb{R}^3}\int_{\Omega\times\mathbb{R}^3}K(x,y)f(t,x,v)f(t,y,w)|w-v|^2\,dydwdxdvdt\nonumber\\
&+\int_0^{\tau}\int_{\Omega}\mathbb{S}(\nabla(u-u_\infty)):\nabla(u-u_\infty)\,dxdt
+\varepsilon\int_0^{\tau}\int_{\Omega}|\nabla(u-u_\infty)|^4\,dxdt
\nonumber\\
&+\int_0^\tau\int_{\Gamma_{\rm{in}}}(\rho^\gamma+\delta\rho^\beta)|u_B\cdot\nu(x)|\,d\sigma(x)dt
+\int_0^\tau\int_{\Gamma_{\rm{out}}}\Big(\frac{1}{\gamma-1}\rho^\gamma+\frac{\delta}{\beta-1}\rho^\beta \Big)|u_B\cdot\nu(x)|\,d\sigma(x)dt\nonumber\\
&+\int^{\tau}_0\int_{\Sigma^-}(v\cdot \nu(x))\frac{|v|^2}{2}g\,d\sigma(x)dvdt
-\int^\tau_0\int_{\Gamma_{\rm{in}}}\rho\rho_B|u_B\cdot \nu(x)|\,d\sigma(x)dt\nonumber\\
\leq\;&\int_{\Omega}\Big(\frac{1}{2}\rho_0|u_0-u_\infty|^2+\frac{1}{\gamma-1}\rho^{\gamma}_0
+\frac{\delta}{\beta-1}\rho_0^{\beta}+\frac{1}{2}\rho_0^2
+\int_{\mathbb{R}^3}\frac{|v|^2}{2}f_{0}\,dv\Big)\,dx \nonumber\\
&-\frac{1}{2}\int^{\tau}_0\int_{\Omega}\rho^2{\rm{div}}u\,dxdt
-\int^{\tau}_0\int_{\Omega}(j-nu)\cdot u_\infty\,dxdt
+3\int_{0}^\tau\int_{\Omega\times\mathbb{R}^3}f\,dxdvdt\nonumber\\
&+\int^{\tau}_0\int_{\Omega}\big(-(\rho^{\gamma}+\delta\rho^{\beta}){\rm{div}}u_\infty
-\mathbb{S}(\nabla u_\infty):\nabla(u-u_\infty)
-\rho u\cdot\nabla u_\infty\cdot(u-u_\infty)\nonumber\\
&\qquad +\varepsilon\nabla\rho\cdot\nabla(u-u_\infty)\cdot u_\infty  \big)\,dxdt
+\int_0^\tau\int_{\Gamma_{\rm{in}}}\Big(\frac{\gamma}{\gamma-1}\rho^{\gamma-1}
+\frac{\delta\beta}{\beta-1}\rho^{\beta-1} \Big)\rho_B|u_B\cdot\nu(x)|\,d\sigma(x)dt.
\end{align}
\end{prop}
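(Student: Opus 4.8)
The plan is to pass to the limit $N\to\infty$ in the approximate identities \eqref{119}--\eqref{121} and in the energy inequality \eqref{e1}, exactly in the style of \cite{MV}, so the only genuinely new work concerns the nonlinear alignment contributions, namely the terms involving $f_N L[f_N]$ and the truncation $\chi_{\{|u_N|\le N\}}$. First I would record the uniform bounds that come for free from \eqref{e1}: boundedness of $\rho_N$ in $L^\infty(0,T;L^\gamma\cap L^\beta(\Omega))$, of $\sqrt\varepsilon\,\nabla\rho_N$ in $L^2$, of $u_N-u_\infty$ in $L^2(0,T;H^1(\Omega))\cap L^4(0,T;W^{1,4}(\Omega))$ (hence $u_N$ bounded in those spaces after adding $u_\infty\in W^{1,\infty}$), of $\||v|^2 f_N\|_{L^\infty(0,T;L^1)}$, together with \eqref{116}--\eqref{118} from Lemma \ref{1'}. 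From these and the parabolic regularity for \eqref{108} one extracts $\rho_N\to\rho$ strongly in, say, $L^2((0,T)\times\Omega)$ and a.e., $u_N\rightharpoonup u$ weakly in $L^2(0,T;H^1)$, and $f_N\overset{*}{\rightharpoonup} f$ weakly-$*$ in $L^\infty$ with $f\ge0$; the $\varepsilon$-viscosity in the momentum equation together with the $\delta\rho^\beta$ pressure provides enough compactness of $u_N$ (via the standard Aubin--Lions / Lions--Feireisl machinery at fixed $\varepsilon,\delta$) to identify the nonlinear terms $\rho_N u_N\otimes u_N$, $\rho_N^\gamma$, $\rho_N^\beta$ and $\varepsilon|\nabla(u_N-u_\infty)|^2\nabla(u_N-u_\infty)$ in the limit. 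Since $u_N$ is bounded uniformly in $N$ in $L^2(0,T;H^1)$ but not in $L^\infty$, one must also check that $\chi_{\{|u_N|\le N\}}\to1$ a.e. (indeed $|u_N|<\infty$ a.e. and $N\to\infty$), so the truncation disappears in the limit in \eqref{119}, \eqref{121} and in the source term of \eqref{e1}; this requires the strong convergence of $u_N$, which at fixed $\varepsilon,\delta$ is available.

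The heart of the argument is the strong compactness of the moments of $f_N$, which I would obtain from the velocity-averaging Lemma \ref{CS5}. Writing the kinetic equation \eqref{r1} (with the fixed-point choice $\tilde u=u_N$, $E=E[f_N]$, $q=q[f_N]$) in the form $\partial_t f_N + v\cdot\nabla_x f_N = \mathrm{div}_v G_N$ with
\[
G_N=\nabla_v f_N-\big(u_N\chi_{\{|u_N|\le N\}}-v+L[f_N]\big)f_N,
\]
extended by zero outside $\Omega\times\mathbb{R}^3$, one checks that $G_N$ is bounded in $L^1_{loc}$ using \eqref{estimate2}, the $|v|^2 f_N$-bound, and the uniform bound on $L[f_N]$ coming from \eqref{116}; since $f_N$ is bounded in $L^\infty$ and $(|x|^2+|v|^2)f_N$ in $L^\infty(0,T;L^1)$, Lemma \ref{CS5} applies and yields $\int_{\mathbb{R}^3}v f_N\,dv\to\int_{\mathbb{R}^3}v f\,dv$ (and likewise $n_N\to n$) strongly in $L^p((0,T)\times\Omega)$ for $1\le p<5/4$. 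This strong convergence of $j_N$ is exactly what is needed to pass to the limit in the coupling term $(j_N-n_N u_N)\chi_{\{|u_N|\le N\}}\cdot\phi$ in \eqref{121} and in $-\int(j_N-n_N u_N)\chi\cdot u_\infty$ in \eqref{e1}. For the alignment term in \eqref{119}, I would combine this strong convergence of $\int v f_N\,dv$ with the weak convergence $\int_{\Omega\times\mathbb{R}^3}K(x,y)f_N(t,y,w)\,dwdy\rightharpoonup\int_{\Omega\times\mathbb{R}^3}K(x,y)f(t,y,w)\,dwdy$ in every $L^p$, $1<p<\infty$ (a consequence of $f_N\overset{*}{\rightharpoonup}f$ and Lipschitz continuity plus boundedness of $K$); a strong-times-weak pairing then gives convergence of $\int f_N\, v\cdot L[f_N]\cdot\nabla_v\varphi$, and the remaining $f_N$-dependence in $L[f_N]$ that is linear in the "test" slot is handled analogously, so the full term $\int f_N L[f_N]\cdot\nabla_v\varphi$ passes to the limit.

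Once all nonlinear terms are identified, $(f,\rho,u)$ solves \eqref{3}--\eqref{10} in the weak sense, i.e. the limiting versions of \eqref{119}--\eqref{121} hold. For the energy inequality \eqref{e2}, I would use weak lower semicontinuity of the convex quantities on the left-hand side of \eqref{e1} (the kinetic energies $\frac12\rho|u-u_\infty|^2$, $\rho^\gamma$, $\rho^\beta$, $\rho^2$, $\int|v|^2 f\,dv$, the dissipation terms $\mathbb{S}(\nabla(u-u_\infty)){:}\nabla(u-u_\infty)$, $|\nabla(u-u_\infty)|^4$, $\varepsilon(\gamma\rho^{\gamma-2}+\delta\beta\rho^{\beta-2})|\nabla\rho|^2$ after writing them as squared gradients of $\rho^{\gamma/2}$ etc., and the binary alignment double integral, which is weakly lower semicontinuous jointly in $f_N$ by Fatou/convexity), together with convergence (not merely one-sided) of the right-hand side terms — here I use the strong convergences already established for $\rho_N$, $j_N$, $n_N$, $u_N$, $\chi_{\{|u_N|\le N\}}\to1$, and the trace terms on $\Gamma_{\rm in},\Gamma_{\rm out}$ handled as in \cite{CJN}. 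The main obstacle is precisely making the kinetic moments compact in the presence of both the fluid coupling and the nonlinear alignment: the truncation $\chi_{\{|u_N|\le N\}}$ must be controlled uniformly (done via the $\varepsilon$-dissipation $\varepsilon\int|\nabla(u_N-u_\infty)|^4$, which bounds $u_N$ in $L^4(0,T;W^{1,4})\hookrightarrow L^4(0,T;L^\infty)$ in 3D, so that $\chi_{\{|u_N|\le N\}}$ differs from $1$ only on a set of measure $\to0$), and the velocity-averaging input $G_N\in L^1_{loc}$ has to be checked carefully because $G_N$ contains the products $u_N f_N$ and $L[f_N] f_N$, which are only integrable rather than, say, $L^2$; choosing $\varphi(v)$ with $|\varphi(v)|\le C|v|$ in Lemma \ref{CS5} and exploiting the $|v|^2 f_N$ bound is what closes this.
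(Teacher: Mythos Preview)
Your proposal is correct and follows essentially the same route as the paper: velocity averaging via Lemma~\ref{CS5} for the strong compactness of $\int_{\mathbb{R}^3} v f_N\,dv$, a strong--weak product argument for the alignment term $f_N L[f_N]$, the standard \cite{MV}/\cite{CJN} machinery for the fluid variables at fixed $\varepsilon,\delta$, and weak lower semicontinuity of convex functionals for \eqref{e2}. One minor technical slip worth flagging: you assert $G_N\in L^1_{loc}$, but Lemma~\ref{CS5} as stated requires $G_N\in L^p_{loc}$ for some $p>1$; this is repaired by using the multi-index $\alpha$ in the lemma to write $\Delta_v f_N$ as a second-order $v$-divergence of $f_N\in L^\infty$, and by noting that the drift pieces $(u_N\chi_{\{|u_N|\le N\}}-v+L[f_N])f_N$ lie in $L^p_{loc}$ for some $p>1$ thanks to the $L^4(0,T;L^\infty)$ bound on $u_N$ you already derived and the $L^\infty$ bounds on $f_N$ and $L[f_N]$.
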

\begin{remark}
The weak formulation of momentum equation \eqref{6} can be written as, for any $\phi\in C_c^\infty((0,T)\times\Omega;\mathbb{R}^3)$,
\begin{align*}
\int^{T}_{0}&\int_{\Omega}\big(\rho u\cdot\partial_t\phi+(\rho u\otimes u):\nabla\phi+\rho^\gamma{\rm{div}}\phi
+\delta\rho^\beta{\rm{div}}\phi
-\varepsilon\nabla\rho\cdot\nabla u\cdot\phi\nonumber\\
&-Z_\varepsilon:\nabla\phi
-\mathbb{S}(\nabla u):\nabla\phi
+(j-nu)\cdot\phi\big)\,dxdt
+\int_{\Omega}\rho_0u_0\cdot \phi(0,x)\,dx=0,
\end{align*}
where $Z_\varepsilon$ is the weak limit of the term $\varepsilon|\nabla(u_{N}-u_\infty)|^2\nabla(u_{N}-u_\infty)$ in the equality
\eqref{121}, that is,
\begin{align*}
\varepsilon|\nabla(u_{N}-u_\infty)|^2\nabla(u_{N}-u_\infty)\rightharpoonup Z_\varepsilon \quad
{\rm{in}} \quad L^\frac{4}{3}((0,T)\times\Omega),
\end{align*}
and
\begin{align*}
\|Z_\varepsilon\|_{L^\frac{4}{3}((0,T)\times\Omega)}\rightarrow 0 \quad {\rm{as}}\quad \varepsilon\rightarrow 0,
\end{align*}
where we have used the energy inequality \eqref{e1}.
\end{remark}

\section{Taking the limits $\varepsilon$, $\delta\to 0$.}\label{l}
Our goal of this section is to take the limits $\varepsilon\rightarrow 0$ and $\delta\rightarrow 0$ in the weak formulation of problem \eqref{3}-\eqref{10} and thus to complete the proof of Theorem \ref{main}.

\subsection{Taking the limit $\varepsilon\rightarrow 0$}\label{le}
The triplet $(f_\varepsilon,\rho_\varepsilon,u_\varepsilon)$ denotes the solutions constructed in Section \ref{app}.
A standard argument, for example in \cite{CJN}, based on the effective viscous flux identity, oscillations defect measure, and renormalization techniques for the continuity equation
makes it possible to take the limit $\varepsilon\rightarrow 0$ in the weak formulation of Navier-Stokes system
\eqref{5}-\eqref{10}.
Using the same argument as the one used in subsection \ref{lN}, we can easily pass to the limit $\varepsilon\rightarrow 0$ in the weak formulation of Cucker-Smale problem \eqref{3} and \eqref{7}-\eqref{10}.
In conclusion, we have proved the following proposition:
\begin{prop}
There exists a global weak solution $(f,\rho,u)$ to the following system:
\begin{align}
&\partial_t f+v\cdot \nabla_x f
+{\rm{div}}_v((u-v)f)+{\rm{div}}_v(fL[f])-\Delta_vf=0, \label{54}\\
&\partial_t\rho+{\rm{div}}_x(\rho u)=0, \label{56}\\
&\partial_t(\rho u)+{\rm{div}}(\rho u\otimes u)+\nabla\rho^{\gamma}+\delta\nabla\rho^{\beta}-{\rm{div}}\mathbb{S}(\nabla u)=
-\int_{\mathbb{R}^3}(u-v)f\,dv, \label{57}
\end{align}
subject to
\begin{align*}
&f(0,x,v)=f_{0}(x,v),\quad \rho(0,x)=\rho_0(x),\quad u(0,x)=u_0(x),\\
&\gamma^-f(t,x,v)\big|_{(0,T)\times\Sigma^-}=g(t,x,v),\\
&\rho(t,x)\big|_{(0,T)\times\Gamma_{\rm{in}}}=\rho_B(x),\quad
u(t,x)\big|_{(0,T)\times\partial\Omega}=u_B(x),
\end{align*}
where $(f_0,\rho_0,u_0)$ and $(g,\rho_B,u_B)$ satisfy \eqref{81}-\eqref{83} and \eqref{11}.
Furthermore, $(f,\rho,u)$ verifies
\begin{align*}
&\int_{\Omega}\Big(\frac{1}{2}\rho|u-u_\infty|^2+\frac{1}{\gamma-1}\rho^{\gamma}
+\frac{\delta}{\beta-1}\rho^{\beta}
+\int_{\mathbb{R}^3}\frac{|v|^2}{2}f\,dv\Big)\,dx\nonumber\\
&+\frac{1}{2}\int_0^\tau\int_{\Omega\times\mathbb{R}^3}\int_{\Omega\times\mathbb{R}^3}K(x,y)f(t,x,v)f(t,y,w)|w-v|^2\,dydwdxdvdt\nonumber\\
&+\int_0^{\tau}\int_{\Omega}\mathbb{S}(\nabla(u-u_\infty)):\nabla(u-u_\infty)\,dxdt
+\int^{\tau}_0\int_{\Sigma^-}(v\cdot \nu(x))\frac{|v|^2}{2}g\,d\sigma(x)dvdt\nonumber\\
&+\int_0^\tau\int_{\Gamma_{\rm{in}}}(\rho^\gamma+\delta\rho^\beta)|u_B\cdot\nu(x)|\,d\sigma(x)dt
+\int_0^\tau\int_{\Gamma_{\rm{out}}}\Big(\frac{1}{\gamma-1}\rho^\gamma+\frac{\delta}{\beta-1}\rho^\beta \Big)|u_B\cdot\nu(x)|\,d\sigma(x)dt\nonumber\\
\leq\;&\int_{\Omega}\Big(\frac{1}{2}\rho_0|u_0-u_\infty|^2+\frac{1}{\gamma-1}\rho^{\gamma}_0
+\frac{\delta}{\beta-1}\rho_0^{\beta}
+\int_{\mathbb{R}^3}\frac{|v|^2}{2}f_{0}\,dv\Big)\,dx \nonumber\\
&+\int^{\tau}_0\int_{\Gamma_{\rm{in}}}\Big(\frac{\gamma}{\gamma-1}\rho^{\gamma-1}
+\frac{\delta\beta}{\beta-1}\rho^{\beta-1}\Big)\rho_B|u_B\cdot \nu(x)|\,d\sigma(x)dt
+3\int^{\tau}_0\int_{\Omega\times\mathbb{R}^3}f\,dxdvdt
\nonumber\\
&+\int^{\tau}_0\int_{\Omega}\big(-(\rho^{\gamma}+\delta\rho^{\beta}){\rm{div}}u_\infty
-\mathbb{S}(\nabla u_\infty):\nabla(u-u_\infty)
-\rho u\cdot\nabla u_\infty\cdot(u-u_\infty) \big)\,dxdt\nonumber\\
&-\int^{\tau}_0\int_{\Omega}(j-nu)\cdot u_\infty\,dxdt.
\end{align*}
\end{prop}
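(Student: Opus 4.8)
The plan is to pass to the limit $\varepsilon\to 0$ in the approximate system \eqref{3}--\eqref{10}, keeping $\delta>0$ fixed, following the well-established strategy for the compressible Navier--Stokes equations with inflow/outflow boundary conditions as in \cite{CJN}, and combining it with the kinetic compactness argument already developed in subsection \ref{lN}. Concretely, I would first collect the $\varepsilon$-uniform bounds furnished by the energy inequality \eqref{e2}: the artificial pressure term $\frac{\delta}{\beta-1}\rho_\varepsilon^\beta$ gives $\rho_\varepsilon$ bounded in $L^\infty(0,T;L^\beta(\Omega))$ with $\beta>\max\{\gamma,9/2\}$, the dissipation term controls $u_\varepsilon$ in $L^2(0,T;H^1(\Omega))$, and the $\varepsilon\int|\nabla\rho_\varepsilon|^2$ term shows $\varepsilon\nabla\rho_\varepsilon\to 0$ in $L^2$, so that the artificial terms $\varepsilon\Delta\rho$, $\varepsilon\nabla\rho\cdot\nabla u$, and $\varepsilon\,\mathrm{div}(|\nabla(u-u_\infty)|^2\nabla(u-u_\infty))$ all vanish in the sense of distributions (the last one by the remark following Proposition \ref{p1}, since $\|Z_\varepsilon\|_{L^{4/3}}\to 0$). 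The kinetic part is handled exactly as in subsection \ref{lN}: the bounds \eqref{114} and the moment estimates together with Lemma \ref{1'} and the velocity-averaging Lemma \ref{CS5} yield strong compactness of $\int_{\mathbb{R}^3}vf_\varepsilon\,dv$ and $\int_{\mathbb{R}^3}f_\varepsilon\,dv$ in $L^p$ for $p<5/4$, hence weak convergence of $\int(u_\varepsilon-v)f_\varepsilon\,dv$ and of the nonlinear alignment term $f_\varepsilon L[f_\varepsilon]$; the cut-off $\chi_{\{|u_N|\le N\}}$ has already been removed at the $N\to\infty$ stage, so no truncation survives here.

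The genuinely delicate step, as usual for compressible flows, is the strong convergence of the density, needed to identify the weak limit of the pressure $\rho_\varepsilon^\gamma+\delta\rho_\varepsilon^\beta$ and the convective term $\rho_\varepsilon u_\varepsilon\otimes u_\varepsilon$. I would reproduce the Feireisl--Lions machinery as adapted to Dirichlet/inflow boundary data in \cite{CJN}: (i) derive the \emph{effective viscous flux identity}, i.e. that $\overline{(\rho^\gamma+\delta\rho^\beta)\rho}-(\overline{\rho^\gamma+\delta\rho^\beta})\,\rho = (\frac{4}{3}\mu_1+\mu_2)\big(\overline{\rho\,\mathrm{div}\,u}-\rho\,\mathrm{div}\,u\big)$, using the test function built from the Riesz operators $\nabla\Delta^{-1}$ applied to $\rho_\varepsilon$ (and a suitable extension handling the boundary contributions); (ii) show the renormalized continuity equation holds for the limit, which requires a bound on the oscillation defect measure $\mathrm{osc}_{p}[\rho_\varepsilon\to\rho]$ with $p=\gamma+1$ — here the choice $\beta>9/2$ is exactly what makes $\rho_\varepsilon^\beta\rho_\varepsilon$ equi-integrable so the defect measure is finite; (iii) combine the renormalization for $\rho_\varepsilon$ and for $\rho$ with the effective flux identity and the monotonicity/convexity of $z\mapsto z^\gamma+\delta z^\beta$ to run the Feireisl argument showing $\|\rho_\varepsilon-\rho\|_{L^1}\to 0$, hence $\rho_\varepsilon\to\rho$ a.e. and $\rho_\varepsilon^\gamma+\delta\rho_\varepsilon^\beta\to\rho^\gamma+\delta\rho^\beta$ in $L^1$.

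The only new feature relative to \cite{CJN} is the presence of the kinetic drag $-\int_{\mathbb{R}^3}(u-v)f\,dv = -(nu-j)$ on the right-hand side of \eqref{57} and the coupling through $L[f]$; but these terms are lower order for the density compactness argument. Indeed, $n_\varepsilon=\int f_\varepsilon\,dv$ and $j_\varepsilon=\int vf_\varepsilon\,dv$ are bounded in $L^\infty(0,T;L^{5/3})$ and $L^\infty(0,T;L^{5/4})$ respectively by \eqref{117}--\eqref{118}, they converge strongly by velocity averaging, and $n_\varepsilon u_\varepsilon\rightharpoonup nu$ follows from the strong $L^2$-convergence of $u_\varepsilon$ (Aubin--Lions applied to the fluid, using the momentum equation for the time derivative) against the strong convergence of $n_\varepsilon$; so the forcing term passes to the limit without affecting the effective-flux computation, which only uses the momentum equation structurally. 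Once the density converges a.e., passing to the limit in the weak formulations \eqref{119}--\eqref{121} (with the $\varepsilon$-terms dropped) and in the energy inequality \eqref{e2} via weak lower semicontinuity of the convex terms $\mathbb{S}(\nabla\cdot):\nabla\cdot$, $\rho|u-u_\infty|^2$, $\rho^\gamma$, $\rho^\beta$ and of the nonnegative alignment double integral gives the stated weak solution $(f,\rho,u)$ of \eqref{54}--\eqref{57} together with its energy inequality, with the $\frac12\rho^2$, $\varepsilon|\nabla\rho|^2$ and $\varepsilon|\nabla(u-u_\infty)|^4$ contributions disappearing in the limit. I expect step (ii)--(iii) — controlling the oscillation defect measure uniformly in $\varepsilon$ and closing the renormalization for the limit density under the nonhomogeneous boundary condition \eqref{rhob} — to be the main obstacle, which is precisely why the regularization retains the high artificial pressure $\delta\rho^\beta$ with $\beta>9/2$ at this stage and defers the limit $\delta\to 0$ to the next subsection.
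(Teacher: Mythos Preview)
Your proposal is correct and matches the paper's own approach exactly: the paper's proof of this proposition is a two-sentence sketch that invokes the effective viscous flux identity, oscillation defect measure, and renormalization machinery of \cite{CJN} for the fluid limit $\varepsilon\to 0$, and refers back to subsection~\ref{lN} (velocity averaging) for the kinetic part---precisely the ingredients you spell out in detail.

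One minor correction: in the compressible setting you do \emph{not} obtain strong $L^2$-convergence of $u_\varepsilon$ via Aubin--Lions applied to $u_\varepsilon$ itself, since there is no uniform control on $\partial_t u_\varepsilon$ (only on $\partial_t(\rho_\varepsilon u_\varepsilon)$). The passage $n_\varepsilon u_\varepsilon\rightharpoonup nu$ should instead be argued from the \emph{strong} convergence of $n_\varepsilon$ (velocity averaging) against the \emph{weak} convergence of $u_\varepsilon$ in $L^2(0,T;L^6(\Omega))$ coming from the $H^1$ bound; the integrability $n_\varepsilon\in L^\infty_t L^{5/3}_x$ and $u_\varepsilon\in L^2_t L^6_x$ is enough for the product to make sense. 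This does not affect the overall argument.
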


\subsection{Taking the limit $\delta\rightarrow 0$}
We want to pass to the limit $\delta\rightarrow 0$ and relax our hypotheses on the initial and boundary data \eqref{11}.
Following a very similar argument as that in subsection \ref{le}, we can take the limit on $\delta$ in the weak formulation of
Cucker-Smale system \eqref{54}.
Finally, the limit $\delta\to 0$ in the weak formulation of \eqref{56}-\eqref{57} can be carried out similarly as in \cite{CJN}, and therefore we omit it.
Hence the proof of Theorem \ref{main} is completed.
\hfill\qedsymbol

\medskip
\indent
{\bf Acknowledgements:}
The authors would like to thank Professor Fucai Li for his fruitful discussions and encouragement during the preparation of this paper.
Y. Li are supported by NSFC (Grant No. 12071212).
NZ acknowledges support from the Alexander von Humboldt Foundation (AvH), from the Austrian Science Foundation (FWF) grant P33010, and from the bilateral
Croatian-Austrian Project of the Austrian Agency for International Cooperation in Education and
Research (OAD) grant HR 19/2020.

\medskip\smallskip

\end{document}